\newtheorem{theorem}{Theorem}
\newtheorem{lemma}{Lemma}
\newtheorem{proposition}{Proposition}
\newcommand{\eq}[1]{(\ref{#1})}
\newcommand{\la}{\lambda}
\newcommand{\CC}{\mathbb{C}}
\newcommand{\NN}{\mathbb{N}}
\newcommand{\RR}{\mathbb{R}}
\newcommand{\ZZ}{\mathbb{Z}}
\newcommand{\sign}{\mathop{\rm sign}\nolimits}
\newcommand{\dln}{\triangle n}
\begin{document}

\title{Direct Characterization of Spectral Stability \\ of Small Amplitude Periodic Waves in Scalar Hamiltonian Problems Via Dispersion Relation
\thanks{To appear in SIAM Journal on Mathematical Analysis. Published on arXiv with permission of The Society for Industrial and Apllied Mathematics (SIAM).}}

\author{Richard Koll{\'a}r \\
Department of Applied Mathematics and Statistics \\ 
Faculty of Mathematics, Physics and Informatics \\
Comenius University \\
Mlynsk{\'a} dolina,  842 48 Bratislava, Slovakia \\
e-mail: {\tt kollar@fmph.uniba.sk}\\
\\
Bernard Deconinck\\
Department of Applied Mathematics \\
University of Washington \\
 Seattle, WA, 98195, USA \\
e-mail: {\tt deconinc@uw.edu} \\
\\
Olga Trichtchenko \\
Department of Physics and Astronomy\\
University of Western Ontario\\
London, ON, N6A 3K7, Canada\\
e-mail: {\tt olga.trichtchenko@gmail.com}
}

\maketitle

\begin{abstract}
Various approaches to studying the stability of solutions of nonlinear PDEs lead to explicit formulae determining the stability or instability of the wave for a wide
range of  classes of equations. However, these are typically specialized to a particular equation and checking the stability conditions may not be not straightforward.  We
present results  for a large class of problems that reduce the determination of spectral stability of a wave to a simple task of locating zeros of  explicitly
constructed polynomials. We study  spectral stability of small-amplitude periodic waves in scalar Hamiltonian problems as a perturbation of the zero-amplitude
case. A necessary condition for stability of the wave is that the unperturbed spectrum is restricted to the imaginary axis. Instability can come about through a
Hamiltonian-Hopf bifurcation, i.e., of a collision of purely imaginary eigenvalues of the Floquet spectrum of opposite Krein signature. In recent work on the
stability of small-amplitude waves the dispersion relation of the unperturbed problem was shown to play a central role. We demonstrate that the dispersion
relation provides even more explicit information about wave stability: we construct a polynomial of half the degree of the dispersion relation, and its roots
directly characterize not only collisions of eigenvalues at zero-amplitude but also an agreement or a disagreement of their Krein signatures. Based on this
explicit information it is possible to detect instabilities of non-zero amplitude waves. In our analysis we stay away from the possible instabilities at
the origin of the spectral plane corresponding to modulation or Benjamin-Fair instability. Generalized KdV and its higher-order analogues are used as
illustrating examples.
\end{abstract}

\section{Introduction}
We study the spectral stability of small-amplitude periodic traveling waves in scalar Hamiltonian partial differential equations:
\begin{equation}
u_t = \partial_x  \frac{\delta H}{\delta u}\, .
\label{HPDE}
\end{equation}
Here
$$
u = u(x,t) = u(x+L,t), \quad x \in [0,L], \quad t > 0, \qquad \mbox{and}  \quad H = \int_0^L \mathcal{H} (u, u_x, \dots) \, dx
$$
is the Hamiltonian with density $\mathcal H$. Without loss of generality, we let $L = 2\pi$. This class of equations includes the Korteweg--de Vries (KdV), the generalized and modified KdV equations, the Kawahara equation, and other equations that arise in the study of dispersive problems in water waves, plasma physics etc. \cite{AS, IR}.

We assume that  \eq{HPDE} has a trivial solution, i.e.,
${\delta H}/{\delta u}  = 0$ for $u = 0$, and $H$ has an expansion $H = H^0 + H^1$, where $H^0$ is the quadratic part of $H$ and $H^1$ contains the higher order terms:
\begin{equation}
H^0 = -\frac{1}{2} \int_0^{2\pi} \sum_{j=0}^{N} \alpha_j (\partial_x^{j} u)^2\, .
\label{H0}
\end{equation}
As a consequence, all linear terms in (\ref{HPDE}) are of odd degree, as even degree terms would introduce dissipation. We assume that $N$ is a finite positive integer, and $\alpha_j \in \RR$. These assumptions exclude problems like the Whitham equation \cite{DT} ($N = \infty$) which remains a topic of investigation.

The now-standard approach to examine the stability of waves in Hamiltonian problems with symmetries is the theory developed by Vakhitov and Kolokolov \cite{VK} and Grillakis, Shatah, and Strauss \cite{GSS1, GSS2}, which allows for the determination of spectral stability of waves of arbitrary amplitude. In that setup, spectral stability implies orbital (nonlinear) stability under certain conditions, emphasizing the importance of the spectral information of the linearized problem.
Extensions of these results are found in \cite{KKS, KS2012, Pelinovsky2005, Pelinovsky2012}. Periodic problems within the same framework were considered in \cite{deckap, KapHar}. The use of any of these results relies on index theory
requiring additional information about the PDE. That information is typically provided, for instance, by assuming something about the dimension of the kernel of the linearized problem. For small-amplitude waves extra information is often obtained through a perturbation of the zero-amplitude problem. We avoid index theory and study directly the collision of eigenvalues. The parallel work \cite{TDK} illustrates how small-amplitude information is used to characterize the (in)stability of the waves. Here, we reduce the spectral stability problem for small-amplitude waves to the investigation of zeros of certain recurrently-defined polynomials, which appear in the theory of proper polynomial mappings \cite[p172]{DAngelo} and in orthogonal polynomial theory \cite[Chapter 18]{DLMF}.
To our knowledge, the connection between stability theory and these polynomials is new to the literature.

Our approach allows us to rigorously analyze the stability of KdV-type equations, including the
generalized KdV equation (gKdV), its higher order analogues, and also
the two-term balanced KdV equation. The results agree with the existing literature of spectral stability of periodic waves for gKdV and in the case of balanced high-order KdV equations they confirm and extend the analytical and numerical predictions in \cite{TDK}. Our method is closely related to the results in \cite{DT}, where the spectrum of small-amplitude periodic solutions of Hamiltonian PDEs is determined directly from the dispersion relation of the PDE linearized about the zero solution. Our theory adds to the results in \cite{DT}, and provides a simple and, importantly, a natural framework for studying the spectral stability of waves by perturbative methods. We refer the reader to \cite{DT} and \cite{TDK} for a number of  numerical illustrations of the results presented here for KdV-type equations.

The spectral stability of small-amplitude waves bifurcating from the trivial solution $u=0$ at a critical velocity $c=c_0$ can be examined using regular perturbation theory of the spectrum of \eq{HPDE} linearized about $u=0$ at $c  = c_0$. Our assumptions guarantee that $u=0$ is spectrally stable, i.e., the spectrum of the linearized problem is restricted to the imaginary axis, since \eq{HPDE} is Hamiltonian.

In the periodic setting the whole spectrum of the zero-amplitude problem is needed. However, Floquet theory \cite{KP} allows to decompose the continuous spectrum to an infinite union of sets of discrete eigenvalues of eigenvalue problems parametrized by the Floquet multiplier $\mu$. An important scenario for instability of small-amplitude waves on the bifurcation branch comes about through Hamiltonian-Hopf bifurcations \cite{MacKay,VDM1985} producing symmetric pairs of eigenvalues off the imaginary axis, i.e., exponentially growing and therefore unstable modes. Such bifurcations require non-simple eigenvalues of the linearized problem at zero amplitude, i.e., ``collided eigenvalues''. Furthermore, such colliding eigenvalues can split off from the imaginary axis only if they have opposite Krein signatures \cite{MacKay, KM2013}. Note that we stay away from the origin of the spectral plane and thus we do not consider modulation or Benjamin-Feir instability.

Both the location of the eigenvalues and their Krein signatures are characterized  by the dispersion relation of the linearized problem \cite{DT}.  We show that even the collision of eigenvalues and the agreement of their signatures is directly characterized by the dispersion relation. This characterization is through the roots of a polynomial, which is a reduction of the dispersion relation to a polynomial approximately half its degree. This is a surprising fact as it is by no means clear why such a characterization is possible, as the collisions of eigenvalues and their types are not itself objects that can be identified directly algebraically, particularly with a simpler algebraic relation than the eigenvalues themselves.

\section{General Setting}
\label{sec:GeneralSetting}

We follow the steps outlined in Section III of  \cite{DT}. We use a coordinate transformation $x \rightarrow x-ct$ to a frame moving with the wave,
\begin{equation}
\partial_t u = \partial_x \frac{\delta H}{\delta u} + c\partial_ x u = \partial_x \left( \frac{\delta H}{\delta u} + c u\right) =\partial_x \frac{\delta H_c}{\delta u},
\label{HPDEc}
\end{equation}
where $H_c$ is the modified Hamiltonian. The quadratic part of $H_c$ is
\begin{equation}
H_c^0 = \frac{c}{2} \int_0^{2\pi} u^2 \, dx - \frac{1}{2} \int_0^{2\pi} \sum_{j=0}^{N} \alpha_j (\partial_x^{j} u)^2 \, dx\, .
\label{Hc0}
\end{equation}
Traveling wave solutions of \eq{HPDE} are stationary solutions $U(x)$ of \eq{HPDEc} and stationary points of $H_c$.

\subsection{Perturbation from the trivial state. Dispersion relation}
For all $c\in \RR$, eq.~\eq{HPDEc} has the trivial solution $u(x,t)  = 0$.  We linearize \eq{HPDEc} about the zero solution to obtain an equation for the perturbation $v = v(x,t)$ from the trivial state
\begin{equation}
\partial_t v  = c\partial_x v - \sum_{j=0}^{N} (-1)^j \alpha_j \partial_x^{2j+1} v \, .
\label{lineqc}
\end{equation}
We decompose $v$ into a Fourier series in $x$, $v = \sum_{k=-\infty}^{\infty} \exp(ikx) \hat{v}_k$, to obtain decoupled evolution equations for each of the  Fourier coefficients $\hat{v}_k = \hat{v}_k(t)$:
\begin{equation}
\partial_t \hat{v}_k=-i \Omega(k)\hat{v}
_k
\qquad  k \in \ZZ,
\label{fourier}
\end{equation}
where $\Omega(k)$ is given by
\begin{equation}
\Omega(k) = \omega(k) - ck = \sum_{j=0}^N \eta_j k^{2j+1}\, , \qquad
\omega(k) = \sum_{j=0}^N \alpha_j k^{2j+1}\,, \qquad \eta_j = \alpha_j - c\delta_{j1}\, ,
\label{DR}
 \end{equation}
is the dispersion relation of \eq{lineqc}, obtained by letting $v(x,t) = \exp(ikx - i\Omega t)$ in \eq{lineqc}. Here $\omega = \omega(k)$ is the dispersion relation in the original frame of reference corresponding to \eq{HPDE}--\eq{H0}. Note that $\omega(k)$ is an odd function.

\subsection{Non-zero amplitude branches}
Next, we discuss non-zero amplitude periodic solution branches of \eq{HPDEc} bifurcating from the trivial state.
A requirement for this is that a non-trivial stationary solution of \eq{fourier} exists, i.e.,
$\Omega(k) = 0$, for $k \in \mathbb{N}$, since we have imposed that the solutions are $2\pi$ periodic. Thus
\begin{equation}
c = c_k = \frac{\omega(k)}{k}, \qquad k \in \NN.
\label{ckdef}
\end{equation}
For simplicity, we assume that a unique bifurcating branch emanates from $c=c_k$.  The solutions with $k > 1$ are $2\pi / k$ periodic. We focus on $k = 1$, i.e., $c = \omega(1)$. The cases with $k >1$ may be treated analogously (see Section~\ref{s:ktwo} for a discussion of the $k\ge 2$ in the context of gKdV equation).

\subsection{Floquet theory at zero amplitude}
Using Floquet theory \cite{DK2006, KP} the spectral stability of the non-trivial solution $U = U(x)$ of \eq{HPDEc} on the bifurcation branch starting at $c$  is determined by the growth rates of perturbations of the form
\begin{equation}
v(x,t) = e^{\lambda t}V(x),
~~
V(x) = e^{i\tilde{\mu} x}\sum_{n = -\infty}^{\infty} a_n e^{i nx}\, .
\label{Floquet}
\end{equation}

\noindent
Here $\tilde{\mu} \in (-1/2, 1/2]$ is the Floquet exponent.
Using \eq{fourier} for the zero-amplitude case,
\begin{equation}
\la  = \la_n^{(\tilde{\mu})}= -i \Omega(n+\tilde{\mu}) = - i \omega (n+\tilde{\mu}) + i (n+\tilde{\mu}) c, \qquad n \in \ZZ\, .
\label{spectrum}
\end{equation}
The expression \eq{spectrum} is an explicit expression for the spectrum of the linearized stability problem for solutions of zero amplitude. Next, we examine how the spectrum of the linearization changes as the solution bifurcates away from zero amplitude.

\subsection{Collisions of eigenvalues, Hamiltonian-Hopf bifurcations}
After Floquet decomposition \eq{Floquet}, the elements of the spectrum become eigenvalues of the $\tilde{\mu}$-parameterized operator obtained by replacing $\partial_x\rightarrow \partial_x+i \tilde{\mu}$ in the linear stability problem. The eigenfunctions associated with these eigenvalues are (quasi)periodic and are bounded on the whole real line, see \cite{KP,DO2011} for details.
For zero amplitude, the spectrum \eq{spectrum} is on the imaginary axis. Instabilities for small amplitude come about through collisions of purely imaginary eigenvalues at zero amplitude for a fixed value of $\tilde{\mu}$. Away from the origin, eigenvalues generically split off from the axis through the Hamiltonian-Hopf bifurcations \cite{MacKay,VDM1985} as the solution amplitude increases. Each such Hamiltonian-Hopf bifurcation produces a pair of eigenvalues off the imaginary axis that is symmetric with respect to the imaginary axis, thus yielding an exponentially growing eigenmode.

From \eq{spectrum}, it is easy to detect eigenvalue collisions away from the origin. They correspond to solutions of $\la_{n_1}^{(\tilde{\mu})} = \la_{n_2}^{(\tilde{\mu})} \neq 0$, $n_1,n_2 \in \ZZ$, $n_1 \neq n_2$, $\tilde{\mu} \in (-1/2, 1/2]$, i.e.,
\begin{equation}
-i\Omega(n_1+\tilde{\mu}) = -i\omega(n_1+\tilde{\mu}) + i (n_1+\tilde{\mu}) c = -i\omega(n_2+\tilde{\mu}) + i (n_2+\tilde{\mu}) c =- i\Omega (n_2 +\tilde{\mu})\, ,
\label{eq:collision}
\end{equation}
where $c=c_1$ is given by \eq{ckdef} with $k=1$. Solving this equation results in values of $\tilde{\mu}$ and $n_1$ for which $\lambda^{(\tilde{\mu})}_{n_1}$ is an eigenvalue colliding with another one. Typically this is done by solving \eq{eq:collision} for $\tilde{\mu}$ for different fixed $n_1$.

 \subsection{Krein signature}
A necessary condition for two eigenvalues colliding on the imaginary axis to cause a Hamiltonian-Hopf bifurcation is that the eigenvalues have opposite Krein signatures. The Krein signature is the sign of the energy of the eigenmode associated with the eigenvalue.  For a collision of eigenvalues to produce an instability this energy needs to be indefinite: a definite sign would entail bounded level sets of the energy, leading to perturbations remaining bounded.

For Hamiltonian systems with quadratic part given by \eq{Hc0} the eigenmode of the form
$v(x,t) = a_n \exp\left[i(n+\tilde{\mu}) x + \la_n^{(\tilde{\mu})}t\right] +  \mbox{c.c.}$, where c.c.~stands for complex conjugate of the preceding term, contributes to $H_c^0$ the relative energy (see \cite{DT})
$$
H_c^0|_{(n, \tilde{\mu})} \sim - |a_p|^2\, \frac{\Omega (n+\tilde{\mu})}{n+\tilde{\mu}}.
$$
Thus the Krein signature of $\la_n^{(\tilde{\mu})}$ is given by
\begin{equation*}
\kappa(\la_n^{(\tilde{\mu})}) = -\sign \left( \frac{\Omega(n+\tilde{\mu})}{n+\tilde{\mu}} \right)\, .
\label{Ksign}
\end{equation*}
A simple characterization of agreement of the signatures of two colliding eigenvalues $\la_{n_1}^{(\tilde{\mu})}$ and $\la_{n_2}^{(\tilde{\mu})}$  immediately follows.

\begin{proposition}
Let two eigenvalues $\la_{n_1}^{(\tilde{\mu})} = \la_{n_2}^{(\tilde{\mu})} = \la \neq 0$, $n_1 \neq n_2$,  of the Bloch wave decomposition \eq{Floquet} of
\eq{lineqc} coincide, i.e., \eq{eq:collision} holds. Then the product of Krein signatures of the eigenvalues is characterized by the sign of the quantity
\begin{equation}
q = q_{n_1,n_2}^{(\tilde{\mu})} = \frac{\Omega(n_1+\tilde{\mu})}{n_1+\tilde{\mu}} \cdot \frac{\Omega(n_2+\tilde{\mu})}{n_2+\tilde{\mu}}
= \frac{|\la|^2}{(n_1+\tilde{\mu})(n_2+\tilde{\mu})}\,
\label{qdef}
\end{equation}
Let $Z = Z_{n_1,n_2}^{(\tilde{\mu})} = (n_1+\tilde{\mu}) (n_2+\tilde{\mu})$. Since $\la \neq 0$ the sign of $Z$ characterizes an agreement of Krein signatures of the coinciding eigenvalues:
\begin{equation}
\kappa(\la_{n_1}^{(\tilde{\mu})}) \kappa(\la_{n_2}^{(\tilde{\mu})}) = \sign (q) = \sign \left[(n_1+\tilde{\mu})(n_2+\tilde{\mu})\right] = \sign (Z)\, .
\label{eq:Kreinproduct}
\end{equation}
\label{prop:1}
\end{proposition}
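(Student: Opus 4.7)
The proof plan is to reduce the claim to an algebraic identity using only the explicit Krein signature formula displayed just above the proposition together with the collision identity \eq{eq:collision}.

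First I would unpack the definition of the product of Krein signatures. By the formula $\kappa(\la_n^{(\tilde{\mu})}) = -\sign(\Omega(n+\tilde{\mu})/(n+\tilde{\mu}))$ applied at $n=n_1$ and $n=n_2$, the two minus signs multiply to $+1$ and the product becomes $\sign(q)$, where $q$ is exactly the quantity defined in \eq{qdef}. This reduces the entire claim to showing $\sign(q) = \sign(Z)$.

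Next I would exploit the collision hypothesis to evaluate the numerator of $q$. From \eq{spectrum} we have $\la_{n_j}^{(\tilde{\mu})} = -i\Omega(n_j+\tilde{\mu})$, and since $\Omega$ has real coefficients and is evaluated at the real number $n_j + \tilde{\mu}$, each $\Omega(n_j+\tilde{\mu})$ is real, so $\la$ is purely imaginary. The collision condition \eq{eq:collision} forces $\Omega(n_1+\tilde{\mu}) = \Omega(n_2+\tilde{\mu})$, so their product equals $\Omega(n_1+\tilde{\mu})^2 = |i\Omega(n_1+\tilde{\mu})|^2 = |\la|^2$. This yields the middle equality in \eq{qdef}, namely $q = |\la|^2 / Z$.

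Finally, the assumption $\la \neq 0$ gives $|\la|^2 > 0$, and therefore $\sign(q) = \sign(1/Z) = \sign(Z)$, which combined with the first step proves \eq{eq:Kreinproduct}. There is no serious obstacle here: the entire argument is essentially a one-line consequence of the Krein signature formula, the reality of $\Omega$, and the identity $|\la|^2 = \Omega(n_1+\tilde{\mu})\Omega(n_2+\tilde{\mu})$ at a collision. The only thing to be slightly careful about is making explicit that $\Omega$ is real-valued on $\mathbb{R}$, so that $|\la|^2$ factors cleanly as a product of the two $\Omega$ values rather than, say, $\Omega \cdot \overline{\Omega}$ at distinct arguments.
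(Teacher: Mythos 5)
Your argument is correct and is precisely the computation the paper leaves implicit: the proposition is stated as ``immediately following'' from the signature formula $\kappa(\la_n^{(\tilde{\mu})}) = -\sign\left(\Omega(n+\tilde{\mu})/(n+\tilde{\mu})\right)$, and your three steps (cancel the two minus signs, use the collision identity to write $\Omega(n_1+\tilde{\mu})\Omega(n_2+\tilde{\mu}) = |\la|^2$ via $\la = -i\Omega$ with $\Omega$ real-valued on $\RR$, then divide by $Z \neq 0$ and use $|\la|^2>0$) are exactly the chain of equalities displayed in \eq{qdef} and \eq{eq:Kreinproduct}. Your explicit remark that $\Omega$ is real on the real line, so that $|\la|^2$ factors as the product of the two $\Omega$ values, is a worthwhile clarification the paper omits.
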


We denote
\begin{equation}
\mu := n_2 + \tilde{\mu}, \qquad \mbox{and} \qquad
\dln := n_1 - n_2\, .
\label{ndef}
\end{equation}
Here $\dln > 0$.   Then $Z = \mu (\dln+\mu)$ and the collision condition \eq{eq:collision} reduces to
\begin{equation}
\Omega(\dln+ \mu) = \Omega(\mu)
\, .
\label{resonance}
\end{equation}

\section{Recurrent Sequences of Polynomials}
\label{sec:RecurrentSequences}

Before we revisit \eq{resonance} in the next section, we need to define some particular recurrent sequences of polynomials.

\begin{lemma}
Let $a, b \in \CC$,  $m \in \mathbb{N}_0$, and
$$
t_m = a^m + (-b)^m\, .
$$
Then
\begin{equation*}
t_{m+1} = (a-b) t_m + (ab)t_{m-1}\, ,\qquad m \ge 1.
\label{recurtn}
\end{equation*}
\label{lemma:recurrent}
\end{lemma}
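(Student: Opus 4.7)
The plan is to recognize the sequence $t_m = a^m + (-b)^m$ as a sum of two geometric sequences, and to identify a second-order linear recurrence that $a^m$ and $(-b)^m$ individually satisfy; summing then yields the claim. Concretely, I would observe that $a$ and $-b$ are the roots of the monic quadratic
\begin{equation*}
(x-a)\bigl(x-(-b)\bigr) = x^2 - (a-b)x - ab,
\end{equation*}
so that each root $r \in \{a, -b\}$ satisfies $r^2 = (a-b)r + ab$.

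Multiplying this identity by $r^{m-1}$ for $m \ge 1$ gives $r^{m+1} = (a-b)r^m + ab\, r^{m-1}$. Writing this out once with $r = a$ and once with $r = -b$ yields
\begin{equation*}
a^{m+1} = (a-b)\, a^m + ab\, a^{m-1},\qquad (-b)^{m+1} = (a-b)\,(-b)^m + ab\, (-b)^{m-1}.
\end{equation*}
Adding these two equalities and invoking the definition $t_m = a^m + (-b)^m$ produces exactly $t_{m+1} = (a-b)\, t_m + ab\, t_{m-1}$, which is the desired recurrence.

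There is no substantive obstacle here: the entire argument rests on the elementary fact that a geometric sequence satisfies any linear recurrence whose characteristic polynomial vanishes at its ratio, and on the trivial algebra of expanding $(x-a)(x+b)$. The only bookkeeping point worth noting is the sign convention encoded in using $-b$ rather than $b$ as the second root; once that is fixed, the coefficients $(a-b)$ and $ab$ in the recurrence arise as (minus) the trace and (minus) the determinant-type symmetric functions of the pair $\{a,-b\}$, which matches the statement. The base case $m = 1$ is automatically covered since $t_0 = 2$, $t_1 = a - b$, $t_2 = a^2 + b^2 = (a-b)^2 + 2ab = (a-b)t_1 + ab\, t_0$, confirming the recurrence is valid from $m = 1$ onward with no special initial adjustment required.
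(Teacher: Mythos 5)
Your proof is correct and rests on the same elementary algebra as the paper's: the authors simply expand $(a-b)t_m + ab\,t_{m-1}$ term by term and observe it equals $t_{m+1}$, which is exactly the identity you obtain after summing the two relations $r^{m+1}=(a-b)r^m+ab\,r^{m-1}$ for $r=a$ and $r=-b$. Your packaging via the characteristic polynomial $x^2-(a-b)x-ab$ is a slightly more conceptual presentation of the same one-line verification, so no substantive difference to report.
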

\vspace*{-0.3in}
\begin{proof}
$$
t_{m+1}  = (a-b)(a^m + (-1)^m  b^m) + ab (a^{m-1} + (-1)^{m-1} b^{m-1}) =
(a-b)t_m + (ab)t_{m-1}\, .
$$
\end{proof}

Since $t_0 = 2$ and $t_1 = a-b$, by induction all $t_m$ can be written as polynomials in the two variables $a-b$ and $ab$,
$t_m = t_m (a-b,ab)$.
Further, $t_m$ is a homogeneous polynomial in $a$ and $b$ of degree $m$. We introduce $s_m$ by $t_m  = (a-b)^m s_m (\gamma)$, i.e.,
\begin{equation}
s_m = s_m(\gamma) := \frac{t_m (a-b,ab)}{(a-b)^m}\, , \qquad
\mbox{with $\gamma := \displaystyle\frac{ab}{(a-b)^2}$.}
\label{def:ga}
\end{equation}
The sequence $s_m$ is characterized recursively by
\begin{eqnarray}
s_{m+1}  & = & s_m + \gamma s_{m-1}\, , \quad m \ge 1, \qquad s_0= 2, \quad s_1 = 1, \label{rec:s}
\end{eqnarray}
which shows that $s_m$ is a polynomial in $\gamma$ of degree $m/2$ ($m$ even) or $(m-1)/2$ ($m$ odd).
One can easily see that
\begin{gather*}
s_2(\gamma) =  1+ 2\gamma  , \quad
s_3(\gamma) =  1 + 3\gamma , \quad
s_4(\gamma) = 1 + 4\gamma + 2\gamma^2, \quad
s_5(\gamma) = 1 + 5\gamma + 5\gamma^2\, , \\
s_6(\gamma) = 1+ 6\gamma + 9 \gamma^2 + 3\gamma^3, \qquad
s_7(\gamma) = 1 + 7\gamma + 14\gamma^2 + 7 \gamma^3\, .
\end{gather*}
Solving the recurrence relationship,
\begin{equation}
s_m (\gamma) = \psi_+^m + \psi_-^m\, , \quad m \ge 0, \qquad
\psi_{\pm} := \frac{1}{2}\left(1 \pm \sqrt{1 + 4\gamma}\right)\, .
\label{s:exp}
\end{equation}
That  implies
\begin{equation}\label{lemma2}
s_m(0)=1,~~s_m(-1/4)=2^{1-m}.
\end{equation}
Note that $s_m(\gamma)$ is increasing on $(-1/4,0)$ as
\begin{equation}
s_m'(\gamma) = \frac{m}{\sqrt{1+4\gamma}} (\psi_+^{m-1} - \psi_-^{m-1}) > 0\, .
\label{growths}
\end{equation}
A few lemmas characterizing the behavior of $s_m(\gamma)$ are proved in the Appendix.

\section{Reduction of the Equation for Signatures of Colliding Eigenvalues}
\label{Sec:Reduction}

We prove that for scalar Hamiltonian problems \eq{HPDEc}--\eq{Hc0} of order $2N+1$, the polynomial equation \eq{resonance} characterizing the collision of eigenvalues with indices $n+\mu$ and $\mu$ at zero-amplitude resulting in Hamiltonian-Hopf bifurcations, and thus instability of non-zero amplitude periodic waves, can be expressed as a polynomial of  degree $N$ in a real variable $\gamma$ with coefficients independent of $\mu$, where $\gamma$ is defined as
\begin{equation}
\gamma :=\frac{ \mu (\triangle n + \mu)}{(\dln)^2}\, . \label{gammadef}
\end{equation}

\begin{theorem}
Let
$$
\Omega := \Omega(k)=\sum_{j=0}^N \eta_j k^{2j+1}\, ,
$$
be an odd polynomial of degree $2N+1$, $\eta_j \in \CC$ for $j=0,\dots, N$.
Then
\begin{equation}
\Omega(\dln+\mu) - \Omega(\mu) = \sum_{j=0}^{N} \eta_j ({\dln})^{2j+1}s_{2j+1}\left( \gamma \right)\, ,
\label{qeq}
\end{equation}
where the polynomial $s_{2j+1} = s_{2j+1}(\gamma)$ of degree $j$  is defined recurrently by \eq{rec:s}.
\label{theorem1}
\end{theorem}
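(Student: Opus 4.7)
The identity is almost a direct consequence of Lemma~\ref{lemma:recurrent} once the right substitution is made, so the proof is essentially a matter of matching definitions. My plan is as follows.

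First, I would write out the left-hand side using the explicit polynomial form of $\Omega$:
\begin{equation*}
\Omega(\dln+\mu) - \Omega(\mu) = \sum_{j=0}^{N} \eta_j \left[ (\dln+\mu)^{2j+1} - \mu^{2j+1} \right],
\end{equation*}
so the whole task reduces to showing, for each odd exponent $m = 2j+1$, the single polynomial identity
\begin{equation*}
(\dln+\mu)^{m} - \mu^{m} = (\dln)^{m}\, s_{m}(\gamma), \qquad \gamma = \frac{\mu(\dln+\mu)}{(\dln)^{2}}.
\end{equation*}

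The key step is to match this with Lemma~\ref{lemma:recurrent}. I would set
\begin{equation*}
a = \dln+\mu, \qquad b = \mu,
\end{equation*}
so that $a - b = \dln$ and $ab = \mu(\dln+\mu)$, giving exactly the $\gamma$ defined in \eqref{gammadef}. For the \emph{odd} index $m = 2j+1$, the quantity $t_m = a^{m} + (-b)^{m}$ collapses to $a^{m} - b^{m} = (\dln+\mu)^{m} - \mu^{m}$, which is precisely the term we want. Applying the definition $s_m(\gamma) = t_m(a-b, ab)/(a-b)^m$ from \eqref{def:ga} then yields
\begin{equation*}
(\dln+\mu)^{2j+1} - \mu^{2j+1} = (\dln)^{2j+1}\, s_{2j+1}(\gamma).
\end{equation*}

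Substituting this into the sum produces \eqref{qeq}. The fact that each $s_{2j+1}$ is a polynomial of degree $j$ in $\gamma$ follows immediately from the recursion \eqref{rec:s}, which was already established when the sequence was introduced. There is no real obstacle here; the only thing to be careful about is that $t_m$ collapses to a difference (rather than a sum) precisely because $m$ is odd, which is exactly why the hypothesis that $\Omega$ is an odd polynomial is crucial. I would briefly remark on this point to justify why only the odd-indexed polynomials $s_{2j+1}$ appear, and why no even-indexed $s_m$ enters the reduction.
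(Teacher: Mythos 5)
Your proposal is correct and follows essentially the same route as the paper: the substitution $a = \dln+\mu$, $b = \mu$, the observation that $t_m = a^m + (-b)^m$ collapses to $a^m - b^m$ for odd $m$, and the definition \eqref{def:ga} of $s_m$ are exactly the ingredients of the paper's proof. Your added remark on why oddness of $\Omega$ is what makes only the odd-indexed $s_{2j+1}$ appear is a harmless and accurate elaboration.
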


\begin{proof}
The claim follows immediately by \eq{def:ga} and  Lemma~\ref{lemma:recurrent}  by setting $a := \dln + \mu$ and $b:=\mu$:
$$
\Omega(a) - \Omega(b)  = \sum_{j=0}^{N} \eta_j (a^{2j+1} - b^{2j+1})=  \sum_{j=0}^{N} \eta_j t_{2j+1}(a-b,ab)
=\sum_{j=0}^{N} \eta_j ({\dln})^{2j+1}s_{2j+1}(\gamma)\,.
$$
\end{proof}

As before, the collision condition \eq{resonance} expressed using \eq{qeq} is solved for $\gamma$ for different fixed values of $\dln$. After solving for $\gamma$, it is necessary to check that $\gamma$ gives rise to a real value of $\mu$ by solving the quadratic equation with the unknown  $\mu$:
$$
\mu(\mu + \dln) = \gamma (\dln)^2 \, .
$$
Thus
\begin{equation}
\mu_{1,2} = \frac{-\dln \pm \sqrt{(\dln)^2 + 4\gamma (\dln)^2 }}{2} = \frac{\dln}{2}\left(-1 \pm \sqrt{1 + 4\gamma} \right)
\, .
\label{mueq}
\end{equation}
By Proposition \eq{prop:1} we are interested in negative values of $\gamma$ that characterize a possible coincidence of two eigenvalues of opposite signature, as $\gamma$ has by \eq{gammadef} the same sign as $Z$ in \eq{eq:Kreinproduct}. Then any root $\gamma \in [-1/4,0)$ corresponds to a collision of two eigenvalues of opposite signature. If $\gamma < -1/4$, $\gamma$ does not correspond to a collision of two purely imaginary eigenvalues as $\mu$ is not real. If $\gamma >0$ then there is a collision of two eigenvalues of the same signature.
If $\gamma = 0$ the collision is located at the origin of the spectral plane, i.e., it does not correspond to the Hamiltonian-Hopf bifurcation.

We have proved the following main theorem characterizing the spectral stability of small-amplitude traveling waves of \eq{HPDE}.

\begin{theorem}
\sloppypar Consider a scalar $2\pi$-periodic Hamiltonian partial diff\-er\-en\-tial equation of the form \eq{HPDE} and assume that $u = 0$ is a spectrally stable solution.
Let \eq{DR} be the dispersion relation of the equation linearized about $u = 0$  in a reference frame moving with the velocity $c$. Then a branch of traveling wave solutions of \eq{HPDE} with velocity $c$ bifurcates from the trivial solution at $c = \omega(1)$, see \eq{ckdef}. A necessary condition for a Hamiltonian-Hopf bifurcation at zero-amplitude characterizing a loss of spectral stability of small-amplitude solutions on the bifurcating branch  is that
\begin{equation}
\sum_{j=0}^{N} \eta_j ({\dln})^{2j+1}s_{2j+1}(\gamma) = 0\,
\label{ceq}
\end{equation}
has  a root $\gamma$, $\gamma \in [-1/4, 0)$.
\label{th:gamma}
\end{theorem}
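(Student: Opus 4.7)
The plan is to combine the ingredients already assembled in Sections 2--4: the explicit form \eq{spectrum} of the unperturbed spectrum, the Krein-signature characterization in Proposition~\ref{prop:1}, the polynomial reduction of Theorem~\ref{theorem1}, and the realization formula \eq{mueq} for the Floquet parameter. First, I would spell out what ``Hamiltonian-Hopf bifurcation at zero amplitude'' means algebraically: two distinct Floquet eigenvalues $\la_{n_1}^{(\tilde\mu)}$ and $\la_{n_2}^{(\tilde\mu)}$ must coincide at a common nonzero value $\la$ for some $\tilde\mu \in (-1/2,1/2]$ and $n_1 \neq n_2$, and their Krein signatures must be opposite. Since at zero amplitude the spectrum lies on the imaginary axis by hypothesis, these coinciding eigenvalues are automatically purely imaginary, but the reduction below requires that the resulting $\mu$ be real.

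Next, I would translate the collision condition into the reduced variables $\mu = n_2+\tilde\mu$ and $\dln = n_1-n_2>0$, so that \eq{eq:collision} becomes the resonance condition \eq{resonance}. Applying Theorem~\ref{theorem1} with $a = \dln+\mu$ and $b = \mu$ immediately rewrites $\Omega(\dln+\mu)-\Omega(\mu)$ as the polynomial expression \eq{ceq} evaluated at $\gamma = \mu(\dln+\mu)/(\dln)^2$. Thus every admissible collision generating a candidate Hamiltonian-Hopf bifurcation produces, for some positive integer $\dln$, a root of \eq{ceq}.

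I would then screen those roots by the two remaining requirements. By Proposition~\ref{prop:1}, opposite Krein signatures amount to $\sign Z = \sign\gamma < 0$, since $Z = \mu(\dln+\mu) = (\dln)^2\gamma$. For the root to correspond to a real $\mu$, and hence to two genuine eigenvalues of the unperturbed Floquet problem that can collide, formula \eq{mueq} demands $1+4\gamma \ge 0$, i.e.\ $\gamma \ge -1/4$. Finally, $\gamma = 0$ forces $\mu \in \{0,-\dln\}$, and oddness of $\Omega$ places the collision at $\la = -i\Omega(\mu) = 0$, the case the statement explicitly sets aside. Combining these three restrictions yields $\gamma \in [-1/4,0)$, as claimed.

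I do not foresee any genuine difficulty; the argument is an organization of Proposition~\ref{prop:1}, Theorem~\ref{theorem1}, and \eq{mueq}. The one point that deserves a line of care is verifying that the excluded endpoint $\gamma=0$ truly corresponds to a collision at the origin of the spectral plane, rather than to a distinct scenario that would otherwise have to be included: this is precisely what $\Omega(0)=0$ together with the two candidate values $\mu \in \{0,-\dln\}$ from \eq{mueq} confirms.
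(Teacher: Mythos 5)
Your proposal is correct and follows essentially the same route as the paper: the paper likewise obtains Theorem~\ref{th:gamma} by combining the resonance condition \eq{resonance}, the polynomial reduction of Theorem~\ref{theorem1}, the sign characterization of Proposition~\ref{prop:1} (noting $\sign\gamma=\sign Z$), and the reality condition $\gamma\ge -1/4$ from \eq{mueq}, with $\gamma=0$ excluded because the collision then sits at the origin. Your extra check that $\gamma=0$ forces $\mu\in\{0,-\dln\}$ and hence $\la=0$ is exactly the justification the paper gives, so nothing is missing.
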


\section{Generalized KdV Equations}
As a simple example illustrating an application of Theorem~\ref{th:gamma} to study spectral stability of small-amplitude periodic traveling waves, we consider the generalized KdV equation  (gKdV)
\begin{equation}
\partial_t v + \alpha\partial_x^3 v + \partial_x f(v) = 0\, ,
\label{gKdV}
\end{equation}
and the generalized higher-order KdV equation ($p \ge 2$)
\begin{equation}
\partial_t v + \alpha\partial_x^{2p+1} v + \partial_x f(v) = 0\, .
\label{HOgKdV}
\end{equation}
Here we assume $f(0) = 0$ and periodic boundary conditions, $x\in [0,2\pi]$. Within this work we study high-frequency instabilities, staying away from the
origin in the spectral plane, i.e., we do not discuss the modulational or Benjamin-Feir instability. 

For simplicity we consider \eq{gKdV} first and then discuss the case of \eq{HOgKdV} as the reduction process and the results are completely analogous.
We will pay particular attention to the case of KdV equation with $f(v) = v^2$ in \eq{gKdV}.

For a detailed history of stability results of periodic traveling waves for KdV, mKdV (equation \eq{gKdV} with $f(u) = u^3$), and gKdV we refer the reader to \cite{BD2009, deckap},
see also \cite{KapHar, Johnson2009, DN2010}, see also \cite{DT}, Section 3.1, for numerical results illustrating the theory developed here. The results in the literature can be shortly summarized as:  periodic traveling waves are spectrally stable away from the origin
of the spectral plane (with the exception of cn solutions to mKdV), and also nonlinearly orbitally stable with respect to certain classes of perturbations. The
techniques used to prove the results for KdV are based on its integrability.

The dispersion relation of the linearization of \eq{gKdV} in the traveling frame is
\begin{equation}
\Omega = \Omega(k) = ck + \alpha k^3\, .
\label{OgKdV}
\end{equation}
Branches of small-amplitude waves are bifurcating from the trivial solution for the critical values of $c$ for which $\Omega(k) = 0$ for a nonzero integer value of $k$:
\begin{equation}
c_k = -\alpha k^2\, .
\label{cgKdV}
\end{equation}
Let us now fix $k\in \ZZ/\{0\}$ and set $c = c_k$. The condition for a collision of eigenvalues \eq{resonance} has the form
\begin{equation}
c\dln + \alpha \left[(\dln)^3 + 3\dln\mu (\dln+\mu)\right] = 0\, .
\label{Raux1}
\end{equation}
According to Theorem~\ref{th:gamma}  equation \eq{Raux1} can be rewritten in the form \eq{ceq}, i.e.,
\begin{equation}
c(\dln) + \alpha (\dln)^3 (1+3\gamma) = 0\, .
\label{redKdV}
\end{equation}
The root $\gamma$ of \eq{redKdV} that characterizes the nature of collisions of eigenvalues at zero amplitude is given by
\begin{equation}
\gamma =- \frac{c}{3\alpha (\dln)^2} - \frac{1}{3} = \frac{1}{3}\left( \frac{k^2}{(\dln)^2} - 1\right).
\label{gammaeq}
\end{equation}
The condition $-1/4 \le \gamma < 0$ can be expressed as
$$
-\frac{3}{4} \le \frac{k^2}{(\dln)^2} - 1 < 0, \qquad \mbox{i.e.,} \qquad
\frac{1}{4}(\dln)^2 \le k^2 < (\dln)^2\, ,
$$
or equivalently
\begin{equation}
k^2 < (\dln)^2 \le 4k^2\, , \qquad \mbox{and thus} \qquad
|k| < |\dln| \le 2|k|\, .
\label{kbound2}
\end{equation}
It is easy to see that in this special case the equality in the upper bound in \eq{kbound2} corresponds to a collision of eigenvalues $\lambda$ with indices $n_1+\tilde{\mu} = 1$ and $n_2+\tilde{\mu} = -1$ in \eq{spectrum}. But $\Omega(1) = \Omega(-1) = 0$ for (\ref{OgKdV}--\ref{cgKdV}). Thus the collision of opposite signature eigenvalues corresponding to the root $\gamma = -1/4$ in this particular case is located at the origin of the spectral plane and thus it is not associated with the Hamiltonian-Hopf bifurcation. Thus the instability condition is
\begin{equation}
|k| < |\dln| < 2|k|\, .
\label{kbound}
\end{equation}
Since the stability results are independent of $\alpha$ without loss of generality we assume $\alpha = 1$ in the rest of this section.

\subsection{gKdV Equation. Solutions with base period ${\pmb{2\pi}}$}
First, we consider KdV, i.e., $f(x) = u^2$, as the linear analysis is identical for all $f(x)$ satisfying $f(0) = 0$ and the characterization of the
collision condition in Theorem~\ref{th:gamma} does not dependent on the form of nonlinearity.
In that case, the solution branch indexed by $k=1$ bifurcating at $c_1 = -1$ from the trivial solution corresponds to the cnoidal waves with base-period $2\pi$, see \cite{DT}, Section 3.1, for the solution formula, numerical results, and analysis. The condition \eq{kbound} implies that collisions of eigenvalues with opposite Krein signature at zero-amplitude happen only for two eigenmodes of the form \eq{Floquet} with Fourier indices $n_1, n_2$, $\triangle n = n_1 - n_2$, where $1 < \triangle n < 2$.  As no such $\dln$ exists the small-amplitude cnoidal waves of base-period $2\pi$ are spectrally stable (away from the origin of the spectral plane). This is in agreement with the results obtained in \cite{BD2009} and \cite{DT}, Section 3.1, step 5.  The same result is true for any nonlinearity $f(x)$, including the case of mKdV, and thus, not accounting for a possible modulational instability, small-amplitude periodic traveling waves with base period $2\pi$ are spectrally stable for gKdV  \eq{gKdV}.

\subsection{KdV Equation. Solutions with base period $\pmb{2 \pi /k}$}
\label{s:ktwo}
We discuss the case $k\ge 2$. Solutions on the branch bifurcating  from the trivial solution at $c_k = -k^2$ also correspond in the case of KdV to the
cnoidal wave solutions, as the cnoidal waves comprise all periodic traveling waves to KdV. However, these solutions are subharmonic compared to the solutions on
the branch with index $k=1$, i.e., their base-period is $2\pi/k$. One way to see this is to consider \eq{gKdV} with $f(v) = v^2$ in the frame traveling with velocity $c$:
\begin{equation}
v_t +\alpha v_{xxx} + (v^2)_x + cv_x = 0\, .
\label{KawaharaEq}
\end{equation}
We set
\begin{equation}
y = \frac{x}{k}, \qquad \tau = \frac{t}{k^3}, \qquad u = k^2 v, \qquad \tilde{c} = k^2c.
\label{transform}
\end{equation}
Then \eq{KawaharaEq} transforms to
\begin{equation}
u_{\tau} + \alpha u_{yyy} + (u^2)_y + \tilde{c}u = 0\, .
\label{KawaharaEqTrans}
\end{equation}
Thus any solution $v(x,t)$ of \eq{KawaharaEq} with the base period $2\pi$ traveling with velocity $c$ corresponds 1-to-1 to a solution $u(y,\tau)$ of \eq{KawaharaEqTrans} with the base period $2\pi / k $ traveling with velocity $\tilde{c} = c k^2$. The $k$-repetition of $2\pi/k$-periodic solution of \eq{KawaharaEqTrans} is also a $2\pi$-periodic solution of \eq{KawaharaEqTrans} that is equivalent to \eq{KawaharaEq} with $c = c_k$. This relation allows to identify through \eq{transform}  the branch of $2\pi$ periodic solutions of \eq{KawaharaEq} bifurcating at $c=c_k$  with the branch of solutions of the same equation bifurcating at $c = c_1$, i.e., the branch of solutions of \eq{KawaharaEq} bifurcating at $c= c_k$ consists of properly rescaled multicopies of the solutions of the same equation located on the branch bifurcating at $c=c_1$. Therefore perturbations that are subharmonic for $k=1$ are co-periodic for $k\ge 2$, etc. This leads to more eigenvalue collisions for $k\ge 2$ than for $k=1$ since the co-periodic spectrum, e.g. the spectrum for $k\ge 2$ for the Floquet multiplier $\mu=0$  includes (after a proper rescaling) the union of the spectrum for $k=1$ and $\mu=0$, $\mu = 1/k$, $\mu = 2/k, \dots$.

\begin{figure}[h]
\centering
\includegraphics[width=\textwidth]{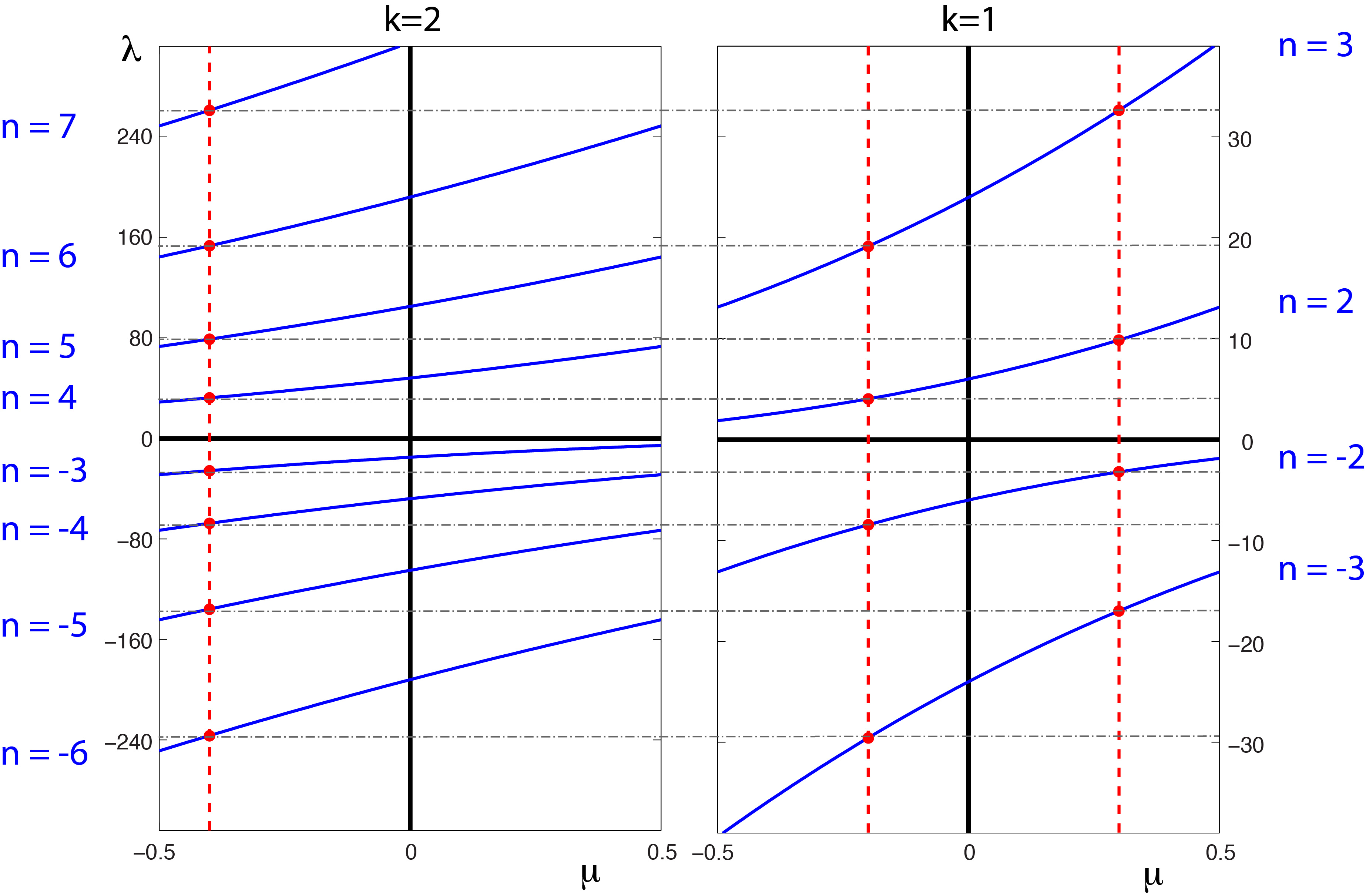}
\caption{
Illustration of the relation \eq{sigma8} of the spectrum $\sigma^{(2)}$ (left) and $\sigma^{(1)}$ (right) for KdV equation. Individual curves correspond to different values of $n$ with the index $n$ indicated. The spectrum partitions $\sigma_{\mu}$ correspond to all $\la$ for a given $\mu$.
Displayed are $\la = \la_n^{(\mu)}$ values for $\mu = -0.4$ ($k=2$, left) and  $\mu = -0.2$ and $\mu = 0.3$ ($k=1$, right).
For better visibility we have removed the branches with indices $n$, $-2\le n\le 3$ ($k=2$)  and $-1\le n \le 1$ ($k=1$), all undisplayed branches lie close to the horizontal axis. Note the scaling factor 8 on the $\la$ axis (left) for $\sigma^{(2)}$ compared to $\sigma^{(1)}$ (right). 
\label{Fig:k12}}
\end{figure}

As an illustration consider the case $k=2$. The spectrum of the linearized problem is given by
\begin{equation}
\sigma^{(2)} = \displaystyle\bigcup_{\mu \in (-1/2,1/2]} \sigma^{(k=2)}_{\mu} =
\left\{ \la_n^{(\mu)}; \ \la_n^{(\mu)} = - i \left[4(n+\mu) - (n+\mu)^3\right], n \in \ZZ \right\}\,.
\label{sigma2}
\end{equation}
On the other hand, the spectrum for $k=1$ is given by
\begin{equation}
\sigma^{(1)} = \displaystyle\bigcup_{\mu \in (-1/2,1/2]} \sigma^{(k=1)}_{\mu} =
\left\{ \la_n^{(\mu)}; \ \la_n^{(\mu)} = - i \left[(n+\mu) - (n+\mu)^3\right], n \in \ZZ \right\}\,.
\label{sigma1}
\end{equation}
It is easy to see (see Fig.~\ref{Fig:k12} for a visualization) that for all $\mu \in (-1/2,1/2]$
\begin{equation}
\frac{1}{8}\sigma_{\mu}^{(k=2)} = \sigma_{\mu/2}^{(k=1)} \cup \sigma_{\mu/2 + 1/2}^{(k=1)}\, .
\label{sigma8}
\end{equation}
Here multiplication of the set by a scalar means multiplication of each of its elements by the scalar and we use the periodicity $\sigma_{\mu} = \sigma_{\mu+1}$ for all $\mu \in \RR$ to properly define the second term $\sigma_{\mu/2 + 1/2}^{(k=1)}$.

The condition \eq{kbound} indicates that there are collisions of the eigenvalues of opposite signature at zero amplitude for modes of the form \eq{Floquet} for Fourier indices
$n_1, n_2$, with $\triangle n = n_1 - n_2$ satisfying $\triangle n \in \{ k+1, \dots, 2k-1\}$ and that is for $k\ge2$ a non-empty set. Generically, this would imply spectral instability of the waves. However, none of these collisions unfold for non-zero amplitude to a Hamiltonian-Hopf bifurcation.  Such bifurcations are not possible as according to
 \cite{BD2009} all periodic traveling wave solutions to KdV  are spectrally stable.  As a collision of eigenvalues of opposite Krein signature is only a necessary condition for a Hamiltonian-Hopf bifurcation, the analysis presented here does not allow to see  this phenomenon directly. Some indication can be found in the fact that these new collisions at $c = c_k$ correspond to collisions of opposite signature eigenvalues arising from different components (as opposed to from the same component)
of the union on the right hand side of \eq{sigma8}. The different spectrum partitions and associated eigenspaces do not interact with each other,  see   \cite{DeMeMa1992} and [Koll{\'a}r \& Miller, preprint 2018] for a throughout discussion of avoided Hamiltonian-Hopf bifurcations.

It is possible to see within the analysis presented here that the collisions of the opposite Krein signature eigenvalues of the $2\pi  / k$
periodic solutions are just an artifact of the $2\pi$ periodic setting, i.e., when one considers the stability of
the  $2\pi / k$ periodic solutions as the stability of its $k$-repetition in the $2\pi$ periodic frame in \eq{KawaharaEq}. Due to the
periodic character of the solution the stability of such a $k$-repetition is equivalent to the stability of a
single $2\pi / k$ periodic repetition in \eq{KawaharaEqTrans}. But we have proved above that the waves with period $L$ considered on the interval $[0,L]$ are spectrally stable (this corresponds to $k = 1$ for \eq{KawaharaEq} where we have set without loss of generality $L = 2\pi$). Therefore the $2\pi / k$ periodic waves are spectrally stable and
all collisions at zero amplitude of \eq{KawaharaEq} at $c = c_k$ are only due to multi-coverage of the spectrum $\sigma^{(k)}$ as in \eq{sigma8}.

The same argument can be used for gKdV with the nonlinearity $f(v) =v^n$, $n \ge 2$. However, in regard to the spectral stability of
small-amplitude waves lying on branches bifurcating at $c = c_k$ for $k \ge 2$ for a general $f(v)$, $f(0) = 0$, we can only conclude that there are collisions of the opposite signature eigenvalues at zero amplitude. A lack of a transformation analogous to \eq{transform}, that requires existence of a positive $r$ such that $f(au) = a^r f(u)$ for all $a \in \RR$, does not allow to rule out the potential Hamiltonian-Hopf bifurcations.

\subsection{Higher-order gKdV Equation}
A similar analysis can be performed for the higher-order gKdV equation \eq{HOgKdV}. In that case $\Omega(k) = -ck + (-1)^{p+1} \alpha  k^p$ and $c_k = (-1)^p \alpha k^{p-1}$. The relation $\Omega(n+\mu) = \Omega(\mu)$ reduces to a polynomial equation of degree $p$ for $\gamma$.
Similarly as for $p=1$ it is possible for $p=2$  to explicitly show that all the waves on the branch $k=1$ are spectrally stable, as none of the roots of $\Omega(\dln+\mu) = \Omega(\mu)$ in terms of $\gamma$ are located in the interval $(-1/4,0)$. To see this one needs to determine for which integer values of $\dln$ the roots of
$$
-k^4 + (\dln)^4\left( 1 + 5\gamma + 5\gamma^2\right) = 0
$$
lie in the interval $\gamma \in (-1/4,0)$. A short calculation reveals that the condition reduces to $|k| < |\dln| < 2|k|$, i.e. the same condition as for $p=1$ analyzed above leading to stability for $k=1$. The same statement can be proved for any $p\ge 1$ for which the equation for $\gamma$ has the form
\begin{equation}
-k^{2p} + (\dln)^{2p} s_{2p+1}(\gamma) = 0\, .
\label{gammap}
\end{equation}
There $s_{2p+1}(-1/4) = 2^{-2p}$ and  $s_{2p+1}(0) = 1$ by \eq{lemma2}, and also $s_{2p+1}(\gamma)$ is continuous on $[-1/4,0]$ and increasing on $(-1/4,0)$ by  \eq{growths}. Therefore the roots of \eq{gammap} lie in the interval $\gamma \in (-1/4,0)$ if and only if $|k| < |\dln| < 2|k|$. Hence the  small-amplitude periodic traveling wave solutions to \eq{HOgKdV} with the base period $2\pi$ ($k=1$) are spectrally stable, except perhaps with respect to modulational perturbations. The question of spectral stability of  small-amplitude wave solutions to \eq{HOgKdV} with the base period $2\pi/k$, $k\ge 2$ is not addressed here.

\section{Balanced Higher Order KdV equations}
\label{Sec:CaseStudy}
We demonstrate the full power of Theorem~\ref{th:gamma} on a more complicated example. Here we explicitly characterize stability regions for
small-amplitude periodic traveling wave solutions of KdV-type equations with two balanced linear terms of odd order:
\begin{equation}
u_t = \partial_x f(u)+ A\, \partial^{2q+1}_x  u + B \, \partial_x^{2p+1} u,
\label{KdVmn}
\end{equation}
subject to periodic boundary conditions.
Here $p > q$ are positive integers, $A, B \in \RR$ are non-zero coefficients, and $f(u)$ is a smooth function of $u$ and its spatial derivatives with $f(0)  = 0$, containing no linear terms.  The literature on this topic is limited. Most relevant is \cite{haraguslombardischeel}, where $f(u)\sim u^2$ (the Kawahara equation), and the period of the solutions is not fixed. It is concluded there that for solutions for which the amplitude scales as the 1.25-th power of the speed, solutions are spectrally stable. No conclusion is obtained for other solutions. Our investigation does not require this scaling, nor does it restrict the type of nonlinearity. Also relevant is \cite{deckap}, where the typical stability approach of \cite{KapHar} is extended to systems with singular Poisson operator like \eq{HPDE}, but the theory is not applied to \eq{KdVmn}. A mostly numerical investigation of equations like \eq{KdVmn} is undertaken in \cite{TDK}. As stated, our theory builds almost exclusively on \cite{DT} and our rigorous results agree with numerical results in \cite{TDK} where the special case $p = 2$, $q = 1$, and $A, B > 0$ was considered.

Traveling wave solutions $u=U(x-ct)$ with wave velocity $c$ satisfy
$$
-c U' =\partial _x f(U)+ A  U^{(2q+1)} + B U^{(2p+1)}.
$$
The spectral stability of small-amplitude waves that bifurcate at zero amplitude from the trivial solution $U=0$ is characterized by the growth of the solutions of  the linear equation
\begin{equation}
v_t = c v_x+A v_{(2q+1)x} + B v_{(2p+1)x},
\label{eqlinear}
\end{equation}
with dispersion relation
\begin{equation*}
\Omega = \Omega_{p,q}(k) = -ck - A (-1)^q k^{2q+1} - B (-1)^p k^{2p+1}=-ck-\alpha k^{2q+1}+\beta k^{2p+1},
\label{DRgeneral}
\end{equation*}
where we have introduced
\begin{equation}
\alpha = A (-1)^q, \qquad \qquad \beta = - B(-1)^p.
\label{ABdef}
\end{equation}
Without loss of generality, we assume that $\alpha > 0$. If not, the transformation $x \rightarrow - x$ (i.e., $k\rightarrow -k$), and $c \rightarrow -c$ can be used to switch the sign of $\alpha$. The scaling symmetry of the equation allows us to equate $\alpha=1$ hereafter. The choice of opposite signs in front of $\alpha$ and $\beta$ in \eq{ABdef} is intuitive: if $\alpha$ and $\beta$ have opposite sign the Hamiltonian energy \eq{Hc0} is definite and all eigenvalues have the same signature. This rules out Hamiltonian-Hopf bifurcations and the spectral instabilities following from them. In other words, the interesting case for our considerations is that both $\alpha$ and $\beta$ are positive. Lastly, since we study bifurcations from the first Fourier mode $k = 1$,
$c = \beta-\alpha=\beta-1$.

According to Theorem~\ref{theorem1}, eigenvalue collisions at zero-amplitude are characterized by the roots $\gamma$ of
\begin{equation*}
\dln R(\gamma) :=   -c\dln  - (\dln)^{2q+1} s_{2q+1}(\gamma) + \beta (\dln)^{2p+1} s_{2p+1}(\gamma) = 0.
\label{cd}
\end{equation*}
This is rewritten as
\begin{equation}
\beta \left[(\dln)^{2p} s_{2p+1}(\gamma)  - 1 \right]- \left[ (\dln)^{2q} s_{2q+1}(\gamma) - 1\right]= 0.
\label{eq10}
\end{equation}

Our goal is to find the parameter range $(\beta, \dln)$ for which the root $\gamma$ of \eq{eq10} satisfies $\gamma\in [-1/4,0)$.
The results obtained in the next section are graphically summarized in Fig.~\ref{Fig:region}.

\begin{figure}[h]
\centering
\includegraphics[width=0.85\textwidth]{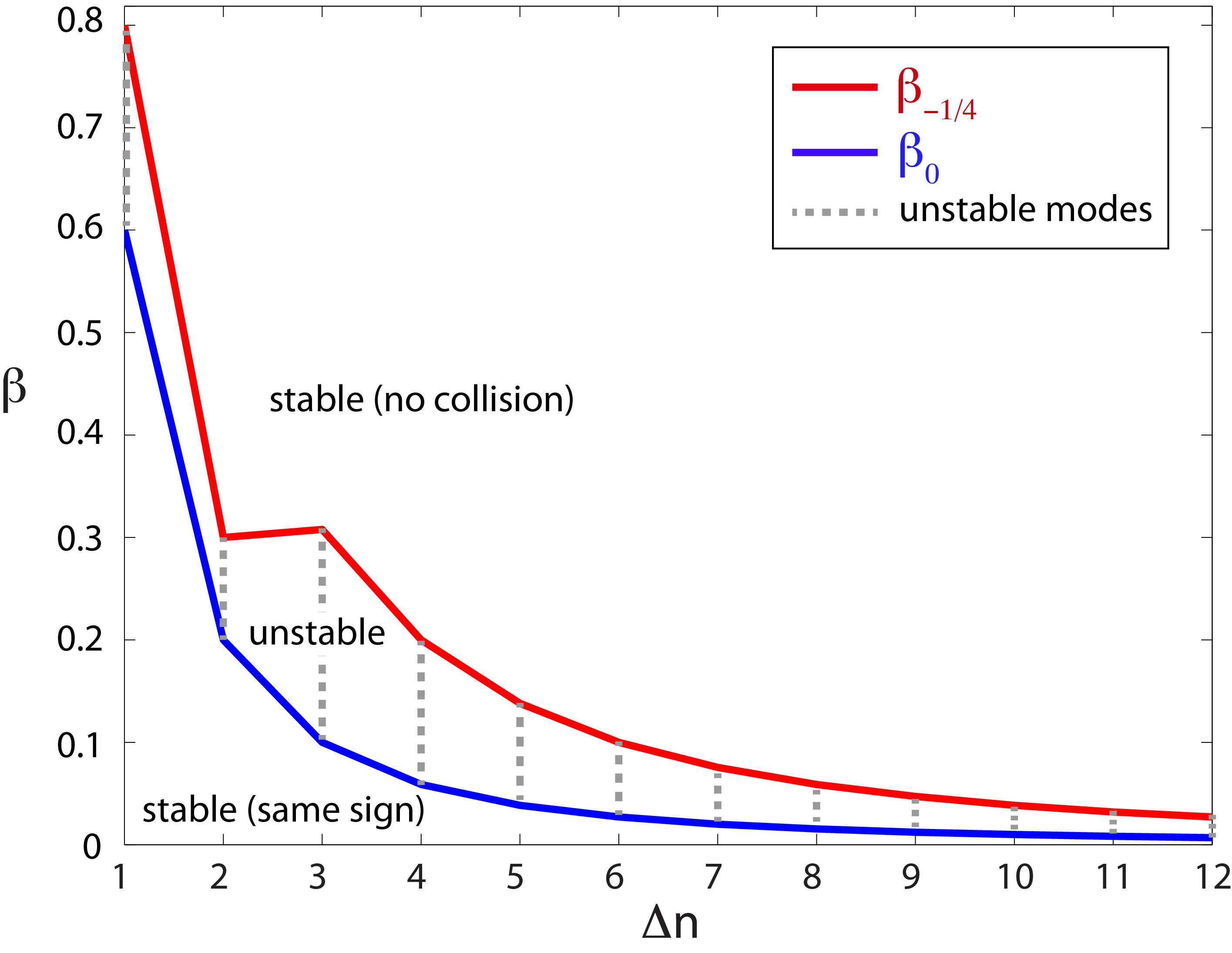}
\caption{Spectral stability regimes of the small-amplitude $2\pi$ periodic traveling waves for the Kawahara equation \eq{KdVmn}, $p=2$, $q=1$, $\alpha = 1$, $k=1$. Unstable pairs $(\dln, \beta)$ are indicated by the dashed line segments, stable pairs 
$(\dln, \beta)$ are above the curve $\beta= \beta_{-1/4}(\dln)$ and below the curve $\beta = \beta_0(\dln)$ given by \eq{beta00}--\eq{beta2} for $\dln \ge 3$, by \eq{betan2} for $\dln = 2$, and by $\eq{betan1}$ for $\dln = 1$. 
 \label{Fig:region}}
\end{figure}

An important role is played by the interval end points $\gamma = 0$ and $\gamma = -1/4$.
By \eqref{lemma2} for $\gamma= 0$ we have
$$
\beta((\dln)^{2p} - 1) - ((\dln)^{2q} - 1) = 0
$$
and therefore we set
\begin{equation}
\beta_0 = \beta_0(\dln) = \frac{(\dln)^{2q} -1}{(\dln)^{2p} - 1}.
\label{beta00}
\end{equation}
On the other hand \eq{eq10} reduces for $\gamma = -1/4$ by \eqref{lemma2} to
\begin{equation}
\beta_{-1/4}  = \beta_{-1/4}(\dln) = \left[\left(\displaystyle\frac{\dln}{2}\right)^{2q} - 1\right]
/ \left[\left(\displaystyle\frac{\dln}{2}\right)^{2p} - 1\right] \, .
\label{beta2}
\end{equation}

It follows immediately from Lemma~\ref{thetalemma} that for $\dln\geq 3$, $\beta_0(\dln)<\beta_{-1/4}(\dln)$, since this inequality may be rewritten as $f_{2p,2q}(\dln)<f_{2p,2q}(2)$ (in the notation of the Lemma).

\subsection{Collisions of eigenvalues of opposite signature}

Since the thresholds $\gamma = 0$ and  $\gamma = -1/4$ correspond, respectively, to $\beta = \beta_0(\dln)$ and  $\beta = \beta_{-1/4}(\dln)$,
where $\beta_0(\dln) < \beta_{-1/4}(\dln)$,  one may conjecture (for $\dln\geq 3$, since for $\dln=1, 2$ either $\beta_0$ or $\beta_{-1/4}$ is not defined) that collisions of eigenvalues of opposite Krein signature happen for $\beta \in (\beta_0(\dln), \beta_{-1/4}(\dln)]$.%
\footnote{Such a result would follow from monotonicity properties of the location of roots $\gamma$ with respect to $\beta$. Alternatively, we use an argument that proves that $\beta_0$ and $\beta_{-1/4}$ are the bounds of the stability region.}
For $\beta < \beta_0(\dln)$ one expects collisions of eigenvalues of the same signature and finally for $\beta > \beta_{-1/4}(\dln)$ one expects no collisions as the roots $\mu$ of \eq{mueq} are not real (see Fig.~\ref{Fig:beta}). As we prove next, this is true. The cases $\dln = 1$ and $\dln = 2$ are treated separately.

See \cite{TDK} for detailed numerical results (wave profiles and Fourier coefficients, spectrum diagrams)  in the case $p=2$, $q=1$ and $f(u) = u^2$ (Kawahara equation), particularly numerical simulations at non-zero amplitude confirming  presence of Hamiltonian-Hopf bifurcations (and thus spectral instability) that completely agree with the collisions of opposite Krein signature eigenvalues at zero-amplitude described here. In the numerical experiments all such collisions studied actually yielded the bifurcation.

\begin{figure}[h]
\centering
\includegraphics[width=0.7\textwidth]{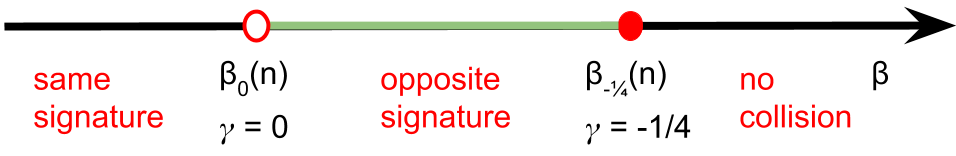}
\caption{Parameter regimes for $\beta$, $\beta \le \beta_0(\dln)$, $\beta \in (\beta_0(\dln), \beta_{-1/4}(\dln)]$, and $\beta > \beta_{-1/4}(\dln)$. \label{Fig:beta}}
\end{figure}

\begin{theorem}
{\bf Case $\mathbf{\dln \ge 3}$.}
Let $p, q$, $p > q$, be positive integers and  let $\dln$ is an integer, $\dln \ge 3$.  The presence and character of collisions of eigenvalues of the linearized problem \eq{eqlinear} at zero amplitude at $c = c_1=\beta-\alpha$
depends on the difference of the indices of the Fourier modes $\dln$ of the perturbation in the following way:
\begin{itemize}
\item[(i)]
If $\dln$ is such that $\beta < \beta_0(\dln)$,  then there is a collision of eigenvalues of the same signature, i.e., there is a root of \eq{eq10} with $\gamma > 0$ and there is no root with $\gamma \in [-1/4,0)$;
\item[(ii)]
If $\dln$ is such that $\beta_0(\dln) < \beta \le \beta_{-1/4}(\dln)$, then there is a collision of eigenvalues of opposite signature, i.e., there is a root $\gamma$ of \eq{eq10} such that $\gamma \in [-1/4, 0)$;
\item[(iii)]
If $\dln$ is such that $\beta_{-1/4}(\dln) < \beta$, then there is no collision of eigenvalues, i.e., all roots $\gamma$ of \eq{eq10} satisfy $\gamma <  -1/4$.
\end{itemize}
\label{maintheorem}
\end{theorem}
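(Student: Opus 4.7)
The plan is to recast the equation \eq{eq10}, namely $\Phi(\gamma) := \beta\, g_p(\gamma) - g_q(\gamma) = 0$ with $g_m(\gamma) := (\dln)^{2m} s_{2m+1}(\gamma) - 1$, as the level-set problem $F(\gamma) = \beta$ for the ratio $F(\gamma) := g_q(\gamma)/g_p(\gamma)$, and to identify $[-1/4,0]$ as the exact preimage of $[\beta_0,\beta_{-1/4}]$ under a monotone $F$. First I would verify that the denominator does not vanish: by \eq{lemma2}, $g_m(-1/4) = (\dln/2)^{2m} - 1 > 0$ and $g_m(0) = \dln^{2m}-1 > 0$ whenever $\dln\ge 3$, while \eq{growths} shows that $g_m$ is strictly increasing on $(-1/4,\infty)$. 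Hence $g_p$ and $g_q$ are strictly positive on $[-1/4,\infty)$, and $F$ is continuous there with endpoint values $F(-1/4) = \beta_{-1/4}(\dln)$, $F(0) = \beta_0(\dln)$; the leading behavior $s_{2m+1}(\gamma) \sim (2m+1)\gamma^m$ (which follows from \eq{s:exp}) gives $F(\gamma) \to 0$ as $\gamma \to +\infty$; and the inequality $\beta_0(\dln)<\beta_{-1/4}(\dln)$ cited just above the theorem holds.

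The main obstacle is the monotonicity claim: $F$ is strictly decreasing on $[-1/4,\infty)$. I would prove this via the parameterization $a := \dln\,\psi_+(\gamma)$, $b := \dln\,\psi_-(\gamma)$ with $\psi_\pm = (1\pm\sqrt{1+4\gamma})/2$, giving $a+b = \dln$, $ab = -\dln^2\gamma$, $a > b$ on $(-1/4,\infty)$, and the compact formula $\dln\, g_m = a^{2m+1} + b^{2m+1} - \dln = a\bigl(a^{2m}-1\bigr) + b\bigl(b^{2m}-1\bigr)$. Using $da/d\gamma = \dln/w$ and $db/d\gamma = -\dln/w$ with $w := \sqrt{1+4\gamma}$, a direct computation reduces $(g_p/g_q)'(\gamma)\ge 0$ to a polynomial inequality in $a,b$ that, after expansion and regrouping via the identity $x^{i}-y^{i} = (x-y)\sum_{j+k=i-1}x^j y^k$, becomes a sum whose individual contributions are nonnegative under $a > b$. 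The appendix lemmas on $s_m$ (beyond \eq{growths} and \eq{lemma2}) supply the needed algebraic identities to bundle the remaining cross-terms.

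Once monotonicity is established, the three cases follow cleanly. For (ii), $\beta\in(\beta_0,\beta_{-1/4}]$ lies in $F([-1/4,0))$, so the intermediate value theorem yields a root $\gamma\in[-1/4,0)$. For (iii), monotonicity gives $F(\gamma)\le F(-1/4) = \beta_{-1/4} < \beta$ for every $\gamma\in[-1/4,\infty)$, whence $\Phi(\gamma) = g_p(\gamma)\bigl(\beta - F(\gamma)\bigr) > 0$ throughout, so all roots of the degree-$p$ polynomial $\Phi$ lie in $(-\infty,-1/4)$. For (i), monotonicity excludes roots of $\Phi$ from $[-1/4,0]$ (there $F \ge \beta_0 > \beta$, so $\Phi < 0$), while $\Phi(0) = (\dln^{2p}-1)(\beta-\beta_0) < 0$ combined with the leading-order estimate $\Phi(\gamma) \sim \beta\,\dln^{2p}(2p+1)\gamma^p \to +\infty$ as $\gamma\to+\infty$ produces, via the intermediate value theorem, a root $\gamma > 0$, which by Proposition~\ref{prop:1} corresponds to a collision of eigenvalues of the same Krein signature.
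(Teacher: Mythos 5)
Your overall architecture is sound: writing \eq{eq10} as $F(\gamma)=\beta$ with $F=g_q/g_p$, checking that $g_p,g_q>0$ on $[-1/4,\infty)$ for $\dln\ge 3$, computing the endpoint values $F(-1/4)=\beta_{-1/4}$, $F(0)=\beta_0$, and then reading off the three cases would indeed prove the theorem --- \emph{provided} $F$ is monotone decreasing on $[-1/4,\infty)$. But that monotonicity claim is the entire difficulty, and you have not proved it. You assert that the parameterization $a=\dln\,\psi_+$, $b=\dln\,\psi_-$ reduces $(g_p/g_q)'\ge 0$ to ``a polynomial inequality in $a,b$ that \ldots becomes a sum whose individual contributions are nonnegative under $a>b$,'' but no such decomposition is exhibited, and it is far from obvious that one exists: the quantity $(g_q'g_p-g_qg_p')$ mixes $s_{2p+1}$, $s_{2q+1}$ and their derivatives for arbitrary $p>q\ge1$, and the sign analysis cannot be expected to fall out termwise. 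Note that the paper itself flags this exact issue in a footnote (``Such a result would follow from monotonicity properties of the location of roots $\gamma$ with respect to $\beta$. Alternatively, we use an argument that proves that $\beta_0$ and $\beta_{-1/4}$ are the bounds of the stability region.'') and deliberately avoids proving monotonicity in $\gamma$.

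What the paper actually proves is strictly weaker than your claim but sufficient: only that $F(\gamma)\ge F(0)=\beta_0$ on $[-1/4,0]$ (for part (i)) and $F(\gamma)\le F(-1/4)=\beta_{-1/4}$ on all of $[-1/4,\infty)$ (for part (iii)), i.e., endpoint extremality rather than monotonicity in $\gamma$. Even these weaker statements are not one-line computations: they are converted into monotonicity \emph{in the index} $m$ of the sequences $(2^m s_{m+1}-1)/(2^m-1)$ and $(2^m s_{m+1}-1)/(m(m+1))$ (Lemmas~\ref{decrease0} and~\ref{decrease4}), combined with the monotonicity of $\theta(n)=(n^{2p}-n^{2q})/(n^{2p+2q}-n^{2q})$ in $n$ (Lemma~\ref{thetalemma}), and those index-monotonicity lemmas are themselves proved by induction through Lemmas~\ref{doubling} and~\ref{gamlemma}, not by termwise positivity. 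So the gap in your argument is concrete: either supply a complete proof that $F'<0$ on $[-1/4,\infty)$ for all $p>q\ge1$ and $\dln\ge3$ (which would be a genuinely stronger result than the paper's), or replace the monotonicity claim by the two endpoint inequalities and prove those, which is what the paper does. As written, the central step is a conjecture dressed as a computation.
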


\begin{proof}

\noindent
{\bf Part (ii).} We show that for all $\dln \ge 3$ and $\beta_0(\dln) < \beta   \le \beta_{-1/4}(\dln)$
there exists $\gamma  \in [-1/4, 0)$  satisfying $R(\gamma) = 0$.
Therefore by \eq{eq10}, in such a parameter regime there is a collision of eigenvalues of opposite Krein signature.

It is easy to see that
$$
R(0) =   \beta[(\dln)^{2p}-1] - [(\dln)^{2q}-1]  > \beta_0 [(\dln)^{2p}-1] - [(\dln)^{2q}-1] = 0,
$$
and,
\begin{eqnarray*}
R( -1/4)& =&
\beta \left( \frac{(\dln)^{2p}}{2^{2p}} - 1\right) - \left(\frac{(\dln)^{2q}}{2^{2q}} - 1\right) \\
& \le &
\beta_{-1/4} \left( \frac{(\dln)^{2p}}{2^{2p}} - 1\right) - \left(\frac{(\dln)^{2q}}{2^{2q}} - 1\right)   = 0.
\end{eqnarray*}
Thus $R(0) > 0 \ge  R\left( -1/4 \right)$ and the polynomial $R(\gamma)$ has a real root $\gamma \in [-1/4, 0)$.

\vspace{\baselineskip}

\noindent
{\bf Part (i).}
Since $\beta < \beta_0(\dln) < \beta_{-1/4}(\dln)$ the same argument as in Part (ii) yields $R(-1/4) < 0$. Also,
$$
R(0) =  \beta[(\dln)^{2p}-1] - [(\dln)^{2q}-1]  < \beta_0 [(\dln)^{2p}-1] - [(\dln)^{2q}-1] = 0\, .
$$
We prove that $R(\gamma) = \beta [(\dln)^{2p} s_{2p+1}(\gamma) -1] - [(\dln)^{2q} s_{2q+1}(\gamma) - 1] < 0$ for all $\gamma \in [-1/4, 0]$.
By Lemma~\ref{slemma} for $\dln \ge 3$  and $p \ge 1$,
$$
(\dln)^{2p}s_{2p+1}(\gamma) \ge \frac{3^{2p}}{2^{2p+1}} > 1 \, .
$$
Thus for all $\gamma \in [-1/4,0)$   and $\beta < \beta_0$,
\begin{eqnarray}
R(\gamma)  & =&  \beta[(\dln)^{2p}s_{2p+1}(\gamma) - 1] - [\dln)^{2q}s_{2q+1}(\gamma) - 1] \nonumber \\
 & <&   \beta_0(\dln)  [(\dln)^{2p}s_{2p+1}(\gamma) - 1] - [(\dln)^{2q}s_{2q+1}(\gamma) - 1] \, .
\label{Req1}
\end{eqnarray}
We prove that the right-hand side of \eq{Req1} is non-positive. This is  equivalent to
\begin{equation}
\beta_0(\dln) = \frac{(\dln)^{2q}-1}{(\dln)^{2p}-1} \le \frac{(\dln)^{2q}s_{2q+1}(\gamma) - 1}{(\dln)^{2p}s_{2p+1}(\gamma) - 1}\, ,
\label{betaeq}
\end{equation}
or to
\begin{equation}
s_{2q+1} \ge s_{2p+1}[1-\theta(\dln)] + \theta(\dln)\, , \qquad \mbox{where} \quad
\theta(n) : =  \frac{(n)^{2p}-(n)^{2q}}{(n)^{2p+2q}-(n)^{2q}}.
\label{sest2}
\end{equation}
Clearly $0 < \theta(n) < 1$.
Since $s_{2p+1} < 1$ it suffices to prove \eq{sest2} for  $\dln$ that maximizes $\theta(\dln)$, $\dln\ge 2$.
However, by Lemma \ref{thetalemma} for $p > q \ge 1$,
$\max_{n \ge 2} \theta(n) = \theta(2)$ and it suffices to prove
$s_{2q+1} \ge s_{2p+1} (1-\theta (2)) + \theta(2)$,
i.e.,
\begin{equation*}
s_{2q+1} 2^{2q} (2^{2p} - 1) \ge s_{2p+1} 2^{2p} (2^{2q} -1) + 2^{2p} - 2^{2q}.
\label{sest3}
\end{equation*}
Therefore \eq{betaeq} follows directly from Lemma~\ref{decrease0} as it is equivalent for $p > q \ge 1$ to
\begin{equation*}
\frac{2^{2q} s_{2q+1} - 1}{2^{2q} - 1} \ge \frac{2^{2p}  s_{2p+1} - 1}{2^{2p} - 1} \, .
\label{separat}
\end{equation*}

Hence we proved $R(\gamma) < 0$ for all $\gamma \in [-1/4,0]$. On the other hand $R(\gamma)$ is an even order polynomial with a positive leading coefficient, i.e., $R(\gamma) \rightarrow \infty$ as $\gamma \rightarrow \infty$. Therefore there exists $\gamma_0 > 0$ such that $R(\gamma_0) = 0$. Such a root corresponds  by \eq{mueq} to a real value of $\mu$. Therefore in this regime there is a collision of two eigenvalues of the same signature.

\vspace{\baselineskip}

\noindent
{\bf Part (iii).}
Note that $R(0) >0$. We show that $R(\gamma) >0$ for $\gamma \ge -1/4$.
First,
$$
R( -1/4) =
\beta \left( \frac{n^{2p}}{2^{2p}} - 1\right) - \left(\frac{n^{2q}}{2^{2q}} - 1\right) >
\beta_{-1/4} \left( \frac{n^{2p}}{2^{2p}} - 1\right) - \left(\frac{n^{2q}}{2^{2q}} - 1\right)   = 0\, .
$$
For $\gamma \ge -1/4$,
\begin{eqnarray}
R(\gamma)  &= &
\beta \left[ (\dln)^{2p}s_{2p+1}(\gamma) - 1\right] - \left[ (\dln)^{2q} s_{2q+1}(\gamma) - 1\right] \nonumber \\
& > &
\beta_{-1/4}  \left[ (\dln)^{2p}s_{2p+1}(\gamma) - 1\right] - \left[ (\dln)^{2q} s_{2q+1}(\gamma) - 1\right]\, ,
\label{eq40}
\end{eqnarray}
since, by Lemma~\ref{slemma}, $(\dln)^{2p}s_{2p+1}(\gamma) \ge 1$.

We prove that
\begin{equation}
\frac{(\dln/2)^q-1}{(\dln/2)^p-1}
\ge
\frac{(\dln)^{q} s_{q+1}(\gamma) - 1}{ (\dln)^{p}s_{p+1}(\gamma) - 1}\, ,
\label{part3a}
\end{equation}
for any $p > q$. From \eq{eq40}, with $p \rightarrow 2p$ and $q \rightarrow 2q$, we obtain $R(\gamma) > 0$ for $\gamma \ge -1/4$.

Denote $m = \dln/2 \ge 1$ and $u_j = 2^j s_{j+1}$ for $j \ge 0$ to rewrite \eq{part3a} as
\begin{equation}
u_q \le u_p (1-\omega(m)) + \omega(m)\, , \qquad \mbox{where} \quad
\omega(m) = \frac{m^p-m^q}{m^{p+q}-m^q}.
\label{omegaeq}
\end{equation}
By Lemma~\ref{thetalemma}, the sequence $\omega(m)\in (0,1)$, is non-increasing for $m \ge 1$. Also, by Lemma~\ref{slemma},
$u_p = 2^p s_{p+1} \ge 1$, and \eq{omegaeq} follows from
$u_q \le u_p [1-\omega(1)] + \omega(1)$, where $\omega(1) = (p-q)/p$. Equation \eq{omegaeq} reduces to
$ (u_q - 1)/q \le (u_p-1)/p$, for $p > q \ge 1$. In terms of $s_q(\gamma)$ this is equivalent to
\begin{equation*}
\frac{2^q s_{q+1}(\gamma) - 1}{q} \le \frac{2^p s_{p+1}(\gamma) -1}{p}, \qquad \mbox{for $p > q \ge 1$},
\label{umono2}
\end{equation*}
which follows for $\gamma \ge -1/4$ from Lemma~\ref{decrease4}, since monotonicity of the positive sequence
$\displaystyle\frac{2^ms_{m +1}-1}{m(m+1)}$ directly implies monotonicity of the sequence $\displaystyle\frac{2^ms_{m +1}-1}{m}$.

Thus $R(\gamma) > 0$ for all $\gamma \ge -1/4$ and $R(\gamma)$ has no roots in $[-1/4, \infty)$ and there are no collisions of eigenvalues in this regime.
\end{proof}

For $\dln=1$, we use a similar argument. For $\dln= 1$ and $\gamma = 0$,
$R(0) = 0$. Hence $\gamma = 0$ is always a root of $R(\gamma) = 0$, corresponding to the relation\footnote{These eigenvalues are present due to symmetries; they do not leave the imaginary axis.} $
\Omega (1) = 0 = \Omega(0)
$.
For $p > q > 0$, denote
\begin{equation}
\beta_0^{(\dln=1)} = \frac{2q+1}{2p+1}\, ,
\qquad \mbox{and} \qquad
\beta_{-1/4}^{(\dln=1)} = \frac{1-2^{-2q}}{1 - 2^{-2p}}.
\label{betan1}
\end{equation}

\begin{theorem}
{\bf Case $\mathbf{\dln=1}$.}
Let $p, q$ be positive integers with $p>q$.
For the linearized problem \eq{eqlinear} at zero amplitude with $c = c_1$, the
presence and the character of eigenvalue collisions depend on the difference $\dln$ of the indices of the Fourier modes of the perturbation as follows:
\begin{itemize}
\item[(i)]
for $\beta < \beta^{(\dln=1)}_0$, eigenvalues of the same signature collide, i.e., there is a root of \eq{eq10} with $\gamma > 0$ and there is no root with $\gamma \in [-1/4,0)$;
\item[(ii)]
for $\beta^{(\dln=1)}_0 < \beta < \beta^{(\dln=1)}_{-1/4}$, eigenvalues of opposite signature collide, i.e., there is a root $\gamma$ of \eq{eq10} so that $\gamma \in [-1/4, 0)$;
\item[(iii)]
for $\beta_{-1/4}^{(\dln=1)} < \beta$, eigenvalues do not collide, i.e., $\gamma <  -1/4$, for all roots $\gamma$ of \eq{eq10}.
\end{itemize}
\label{maintheorem1}
\end{theorem}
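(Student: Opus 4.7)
The plan is to mirror the structure of the proof of Theorem~\ref{maintheorem} while exploiting the special feature of $\dln = 1$. By $s_m(0) = 1$ from \eqref{lemma2}, the polynomial
\[
R(\gamma) := \beta\bigl[s_{2p+1}(\gamma) - 1\bigr] - \bigl[s_{2q+1}(\gamma) - 1\bigr]
\]
satisfies $R(0) = 0$ identically, so $\gamma = 0$ is an automatic, symmetry-induced root, and the task is to locate the remaining roots. Two elementary identities supply the endpoint data I need. From $s_m(-1/4) = 2^{1-m}$,
\[
R(-1/4) = (2^{-2p} - 1)\bigl(\beta - \beta^{(\dln=1)}_{-1/4}\bigr),
\]
and from $s_m'(0) = m$ for $m \ge 2$ (immediate from the recurrence \eqref{rec:s} together with $s_0 = 2$, $s_1 = 1$ by induction),
\[
R'(0) = (2p+1)\bigl(\beta - \beta^{(\dln=1)}_0\bigr).
\]
These identities pin down the sign of $R(-1/4)$ and the sign of $R$ just to the left of $\gamma = 0$ in each of the three parameter regimes.

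Part (ii) is then immediate by the intermediate value theorem: when $\beta^{(\dln=1)}_0 < \beta < \beta^{(\dln=1)}_{-1/4}$, one has $R(-1/4) > 0$ and $R'(0) > 0$, so $R(\gamma) < 0$ for $\gamma$ just to the left of $0$; the sign change forces a root $\gamma \in (-1/4, 0)$, as required.

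For parts (i) and (iii) the endpoint data are consistent with $R$ keeping a single sign throughout $(-1/4, 0)$, so the remaining work is to rule out additional roots in that interval. Introducing
\[
h(\gamma) := \frac{s_{2q+1}(\gamma) - 1}{s_{2p+1}(\gamma) - 1},
\]
one checks $h(-1/4) = \beta^{(\dln=1)}_{-1/4}$ and, by l'H\^opital together with $s_m'(0) = m$, $\lim_{\gamma \to 0^-} h(\gamma) = \beta^{(\dln=1)}_0$. Since $s_{2p+1}(\gamma) - 1 < 0$ on $(-1/4, 0)$ by \eqref{growths}, the factorization $R = (s_{2p+1} - 1)(\beta - h)$ shows that the sign of $R(\gamma)$ on $(-1/4, 0)$ equals the sign of $h(\gamma) - \beta$. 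Parts (i) and (iii) thus reduce to the range property that $h$ maps $[-1/4, 0)$ into $[\beta^{(\dln=1)}_0, \beta^{(\dln=1)}_{-1/4}]$.

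The main obstacle will be establishing this range property; I expect monotonicity of $h$ on $(-1/4, 0)$ to be the cleanest route. Using \eqref{s:exp} with the parametrization $\psi_- = t$, $\psi_+ = 1 - t$, so that $\gamma = -t(1-t)$ for $t \in (0, 1/2]$, one writes $s_m(\gamma) - 1 = (1-t)^m + t^m - 1$; for odd $m$ this factors as $-t(1-t)\,g_m(t)$ with $g_m$ a polynomial symmetric about $t = 1/2$, and $h$ becomes the rational function $g_{2q+1}(t)/g_{2p+1}(t)$. Monotonicity then reduces to a definite-sign comparison of $g_{2q+1}'(t)\,g_{2p+1}(t) - g_{2p+1}'(t)\,g_{2q+1}(t)$ on $(0, 1/2]$, which I expect can be settled by adapting the monotonicity-type lemmas used in the proof of Theorem~\ref{maintheorem}(i),(iii), or by a direct term-by-term inspection of the binomial expansions after recentering at $t = 1/2$. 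Once monotonicity is established, part (iii) follows at once, and in part (i) the required root $\gamma > 0$ is supplied by $R(0) = 0$, $R'(0) < 0$, and the fact that $R$ has positive leading coefficient $\beta(2p+1)$, so $R(\gamma) \to +\infty$ as $\gamma \to +\infty$.
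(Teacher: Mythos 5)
Your endpoint computations are correct and match the paper's: $R(-1/4)=(2^{-2p}-1)(\beta-\beta^{(\dln=1)}_{-1/4})$, $R'(0)=(2p+1)(\beta-\beta^{(\dln=1)}_0)$, and your Part (ii) (intermediate value theorem between $\gamma=-1/4$ and $\gamma=0^-$) is exactly the paper's argument and is complete. Your factorization $R=(s_{2p+1}-1)(\beta-h)$ with $h=(s_{2q+1}-1)/(s_{2p+1}-1)$ also correctly reduces Parts (i) and (iii) on $[-1/4,0)$ to the two-sided bound $\beta^{(\dln=1)}_0\le h(\gamma)\le\beta^{(\dln=1)}_{-1/4}$, which is precisely the content of the paper's inequalities \eq{g5} and \eq{g1}.

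The gap is that this two-sided bound --- the entire technical heart of the theorem --- is not proved; you only propose to obtain it from monotonicity of $h$ in $\gamma$ and state that you \emph{expect} the resulting Wronskian-type sign condition on $g_{2q+1}'g_{2p+1}-g_{2p+1}'g_{2q+1}$ to be settleable. That expectation is where all the work lives: the paper establishes the lower bound via Lemma~\ref{decrease2} (monotonicity in $m$ of $(s_m(\gamma)-1)/m$) and the upper bound via Lemma~\ref{increase2} (monotonicity in $m$ of $(s_{m+1}(\gamma)-1)/(2^{-m}-1)$), each proved by a nontrivial induction resting on Lemmas~\ref{doubling} and~\ref{gamlemma}. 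Your alternative route (monotonicity in $\gamma$ rather than in $m$) is plausible --- it checks out for small $p,q$, where e.g.\ $s_7-1=7\gamma(1+\gamma)^2$ --- but $g_m$ loses its simple product structure already at $m=9$, and you give no argument that the sign comparison closes in general. A second, smaller omission: Part (iii) asserts that \emph{all} nonzero roots satisfy $\gamma<-1/4$, so you must also exclude roots with $\gamma>0$; your range property is only formulated on $[-1/4,0)$, whereas the paper separately proves $R(\gamma)>0$ for $\gamma>0$ using Lemma~\ref{increase2}(ii). As written, the proposal is a sound reduction plus a conjecture, not a proof.
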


\begin{proof}
First, we show that $\beta_0^{(\dln=1)}< \beta_{-1/4}^{(\dln=1)}$, which follows from the function $f(y)=(1-2^{-y})/(1+y)$ being decreasing for $y>2$. Its derivative has the numerator $(1+y)2^{-y}\ln 2+2^{-y}-1$, which is negative at $y=2$, and itself has a derivative that is negative for $y>2$.

Next, for $\beta \le \beta_{-1/4}^{(\dln=1)}$,
\begin{eqnarray}
R(-1/4)& = & \beta \left(s_{2p+1}(-1/4)-1\right)  -\left( s_{2q+1}(-1/4)-1\right)=
\beta(2^{-2p}-1) - (2^{-2q} - 1) \nonumber \\
& \ge & \beta_{-1/4}^{(n=1)} (2^{-2p}-1) - (2^{-2q} - 1) =  0\, ,
\label{eq90}
\end{eqnarray}
where equality holds only for $\beta = \beta_{-1/4}^{(\dln=1)}$.
On the other hand, if $\beta > \beta_{-1/4}^{(\dln=1)}$ then $R(-1/4) < 0$.
Further, for $\gamma = 0$ and all values of $\beta$, $R(0) = 0$. Finally, for $\gamma \in [-1/4,0)$
$$
R'(0) = \beta (2p+1)  - (2q+1).
$$
Therefore, for $\beta < \beta_0^{(\dln=1)}$,
\begin{equation}
R(0) = 0, \qquad R'(0) < 0,
\label{eq91}
\end{equation}
and, for $\beta > \beta_0^{(\dln=1)}$,
\begin{equation*}
R(0) = 0, \qquad R'(0) > 0.
\label{eq92}
\end{equation*}
Note that $R(0) = R'(0) = 0$ for $\beta = \beta^{(\dln=1)}_0$.

{\bf Part (i).}
By \eq{eq90} one has $R(-1/4) >0$, and by \eq{eq91} $R(0) = 0$ and $R'(0) < 0$. We prove that $R(\gamma) >0$ for all $\gamma \in [-1/4,0)$.
Thus $R = R(\gamma)$ does not have any roots in $(-1/4,0)$.
Moreover, $R(\gamma)$ is an odd-degree polynomial with a positive leading coefficient, $R(\gamma) \rightarrow \infty$ as $\gamma \rightarrow \infty$ and $R(0) =0$ and $R'(0) < 0$.
Therefore $R$ has a positive root.

Assume $\gamma \in [-1/4,0)$ and $\beta < \beta^{(\dln=1)}_0$. Then, using \eq{sest2a},
$$
R(\gamma) = \beta (s_{2p+1}(\gamma) -1) - (s_{2q+1}(\gamma)-1) > \beta^{(\dln=1)}_0  (s_{2p+1}(\gamma) -1) - (s_{2q+1}(\gamma)-1)\, .
$$
To establish $R(\gamma) > 0$ it is enough to prove
\begin{equation}
\beta^{(\dln=1)}_{0} \le \frac{s_{2q+1}(\gamma)-1}{s _{2p+1}(\gamma) - 1}, \qquad
\mbox{for $\gamma \in [-1/4,0)$.}
\label{g5}
\end{equation}
By Lemma~\ref{slemma} one has $s_m(\gamma) < 1$ for $m\ge 2$, $\gamma \in [-1/4,0)$. Hence \eq{g5} can be rewritten as
\begin{equation*}
\frac{s _{2p+1}(\gamma) - 1}{2p+1} \ge \frac{s_{2q+1}(\gamma)-1}{2q+1},
\label{g6}
\end{equation*}
which follows for $p > q > 0$ and $\gamma \in [-1/4, 0)$ from Lemma~\ref{decrease2}. Therefore $R(\gamma) > 0$ for $\gamma \in [-1/4,0)$.

\noindent
{\bf Part (ii).}
By \eq{eq90} one has $R(-1/4) >0$, and by \eq{eq91} $R(0) = 0$, $R'(0) > 0$. Therefore there exist a $\gamma \in (-1/4,0)$ such that $R(\gamma) = 0$.

\noindent
{\bf Part (iii).}
In this case  $R(-1/4) < 0$, and by \eq{eq91} $R(0) = 0$ and $R'(0) > 0$. We prove that $R(\gamma) < 0$ for $\gamma\in [-1/4,0)$ and $R(\gamma) > 0$ for $\gamma >0$. Therefore $R(\gamma)$ does not have a non-zero root for $\gamma \ge -1/4$.

First assume that $\gamma \in [-1/4,0)$. Then $\beta > \beta_{-1/4}^{(\dln=1)}$ implies, using \eq{sest2a},
$$
R(\gamma) =
\beta (s_{2p+1}(\gamma) - 1) - (s_{2q+1}(\gamma) - 1) < \beta_{-1/4}^{(\dln=1)} (s_{2p+1}(\gamma) - 1) - (s_{2q+1}(\gamma) - 1)\, .
$$
It suffices to prove
\begin{equation}
\beta^{(\dln=1)}_{-1/4} \ge \frac{s_{2q+1}(\gamma)-1}{s _{2p+1}(\gamma) - 1}, \qquad
\mbox{for $\gamma \in [-1/4,0)$,}
\label{g1}
\end{equation}
to establish $R(\gamma) < 0$.
The inequality \eq{g1} is rewritten as
\begin{equation*}
\frac{s _{2p+1}(\gamma) - 1}{2^{-2p}-1} \ge \frac{s_{2q+1}(\gamma)-1}{2^{-2q}-1},
\label{g2}
\end{equation*}
which follows from Lemma~\ref{increase2}. Thus $R(\gamma) < 0$ for $\gamma \in [-1/4,0)$.

Next, we assume $\gamma >0$. With $\beta > \beta_{-1/4}^{(\dln=1)}$ and using \eq{sest3a},
$$
R(\gamma) =
\beta (s_{2p+1}(\gamma) - 1) - (s_{2q+1}(\gamma) - 1) > \beta_{-1/4}^{(n=1)} (s_{2p+1}(\gamma) - 1) - (s_{2q+1}(\gamma) - 1)\, .
$$
It suffices to prove
\begin{equation*}
\frac{s _{2p+1}(\gamma) - 1}{2^{-2p}-1} \le \frac{s_{2q+1}(\gamma)-1}{2^{-2q}-1},
\label{g2h}
\end{equation*}
which follows from Lemma~\ref{increase2}. Thus $R(\gamma) > 0$ for $\gamma > 0$.
\end{proof}

It is easy to see that for $\dln= 2$, $R(-1/4)= 0$.
Thus $\gamma = -1/4$ is a root of $R(\gamma) = 0$ for all $\beta$.  It corresponds to the fact that
$\Omega(-1) = 0 = \Omega(1)$,  i.e., there is a collision of two eigenvalues of opposite Krein signature at the origin for all $\beta$.
This collision is due to the symmetries of the problem and these eigenvalues do not leave the imaginary axis in the weakly nonlinear regime. Thus this
collision does not affect stability. We focus on the remaining roots of $R(\gamma) = 0$.

We  denote
\begin{equation}
\beta_{0}^{(\dln=2)} =  \frac{2^{2q}-1}{2^{2p}-1}\, ,
\qquad \mbox{and} \qquad
\beta_{-1/4}^{(\dln=2)} = \frac{(2q+1)2q}{(2p+1)2p}\, .
\label{betan2}
\end{equation}
The inequality $\beta_0^{(\dln=2)}<\beta_{-1/4}^{(\dln=2)}$ follows similarly to $\beta_0^{(\dln=1)}<\beta_{-1/4}^{(\dln=1)}$, in the proof of the previous theorem.

\begin{theorem}
{\bf Case $\mathbf{\dln=2}$.}
Let $p, q$, $p > q$, be positive integers.
For the linearized problem \eq{eqlinear} at zero amplitude at $c = c_1$
the presence and the character of collisions of eigenvalues depends on the Fourier-mode parameter $n$ of the perturbation in the following way:
\begin{itemize}
\item[(i)]
for $\beta < \beta^{(\dln=2)}_0$, eigenvalues of the same signature collide, i.e. there is a root of \eq{eq10} with $\gamma > 0$ and there is no root with $\gamma \in (-1/4,0)$;
\item[(ii)]
for $\beta^{(\dln=2)}_0 < \beta < \beta^{(\dln=2)}_{-1/4}$, eigenvalues of the opposite signature collide, i.e. there is a root $\gamma$ of \eq{eq10} such that $\gamma \in (-1/4, 0)$;
\item[(iii)]
for $\beta_{-1/4}^{(\dln=2)} < \beta$, eigenvalues do not collide, i.e. all roots $\gamma$ of \eq{eq10} satisfy $\gamma \le  -1/4$.
\end{itemize}
\label{maintheorem2}
\end{theorem}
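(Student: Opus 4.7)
My plan is to mirror the structure of the proof of Theorem~\ref{maintheorem1} (the $\dln=1$ case), exploiting the identity $R(-1/4)\equiv 0$ that holds for $\dln=2$, just as $R(0)\equiv 0$ was the key identity when $\dln=1$. First I tabulate the boundary data of $R$ on $[-1/4,0]$. Using $s_m(-1/4)=2^{1-m}$ from \eq{lemma2}, one verifies immediately that $R(-1/4)=0$ for every $\beta$; this is the symmetric collision at the origin already flagged in the theorem's preamble. Using $s_m(0)=1$, one has $R(0)=\beta(2^{2p}-1)-(2^{2q}-1)$, which changes sign at $\beta=\beta_0^{(\dln=2)}$. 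A short calculation starting from \eq{growths} (taking the well-defined polynomial limit $\gamma\to-1/4$) yields $s_m'(-1/4)=m(m-1)/2^{m-2}$, so $R'(-1/4)=2\bigl[\beta(2p+1)(2p)-(2q+1)(2q)\bigr]$, which changes sign precisely at $\beta=\beta_{-1/4}^{(\dln=2)}$. The strict inequality $\beta_0^{(\dln=2)}<\beta_{-1/4}^{(\dln=2)}$ is an elementary estimate in the same spirit as the one established in the $\dln=1$ case.

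Part (ii) I expect to close by the intermediate value theorem: for $\beta_0^{(\dln=2)}<\beta<\beta_{-1/4}^{(\dln=2)}$ we have $R(-1/4)=0$, $R'(-1/4)<0$, and $R(0)>0$, so $R$ is strictly negative just above $-1/4$ and strictly positive at $\gamma=0$, producing a root in $(-1/4,0)$, which by Proposition~\ref{prop:1} is a collision of eigenvalues of opposite Krein signature.

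Parts (i) and (iii) are the delicate ones. Since $R(-1/4)\equiv 0$, I would factor $R(\gamma)=(\gamma+1/4)\widetilde R(\gamma)$ with $\widetilde R$ a polynomial of degree $p-1$, and note that on $(-1/4,0]$ the sign of $R$ coincides with the sign of $\widetilde R$, with $\widetilde R(-1/4)=R'(-1/4)$ and $\widetilde R(0)=4R(0)$. In Part (i) both boundary values of $\widetilde R$ are negative; combined with $R(\gamma)\to+\infty$ as $\gamma\to+\infty$ (the leading coefficient $\beta\cdot 2^{2p}(2p+1)$ is positive) and $R(0)<0$, this yields the asserted positive root $\gamma>0$ (same-signature collision). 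In Part (iii) both boundary values of $\widetilde R$ are positive, and an analogous comparison on $[0,\infty)$, using the bound $2^{2p}s_{2p+1}(\gamma)\ge 1$ obtained from \eq{lemma2} and \eq{growths}, rules out any further real root; roots with $\gamma<-1/4$ are irrelevant since they do not correspond to a real $\mu$ by \eq{mueq}.

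The main obstacle is ruling out an interior sign change of $\widetilde R$ inside the open interval $(-1/4,0)$ in Parts (i) and (iii), which is equivalent to the two-sided bound
\begin{equation*}
\beta_0^{(\dln=2)}\;\le\;\frac{2^{2q}s_{2q+1}(\gamma)-1}{2^{2p}s_{2p+1}(\gamma)-1}\;\le\;\beta_{-1/4}^{(\dln=2)}\qquad\text{for all }\gamma\in(-1/4,0),
\end{equation*}
with the endpoints achieving exactly these two extreme values (the upper one via a limit at $\gamma=-1/4$, the lower one directly at $\gamma=0$). These two inequalities reduce, respectively, to monotonicity of the sequences $r\mapsto\frac{2^{2r}s_{2r+1}(\gamma)-1}{2^{2r}-1}$ (non-increasing) and $r\mapsto\frac{2^{2r}s_{2r+1}(\gamma)-1}{(2r+1)(2r)}$ (non-decreasing) in the discrete index $r$ for each fixed $\gamma$ in the interval. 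These are exactly the kind of index-monotonicity estimates already exploited in the proofs of Theorems~\ref{maintheorem} and~\ref{maintheorem1} (compare Lemmas~\ref{decrease0} and~\ref{decrease4}), so I expect either a direct invocation or a short additional lemma of the same flavor to close the argument.
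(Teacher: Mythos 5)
Your proposal is correct and follows essentially the same route as the paper: the endpoint evaluations $R(0)$ and $R'(-1/4)$ (the paper obtains the latter as $\lim_{\gamma\to-1/4^+}R'(\gamma)$ via l'H\^opital, you via $s_m'(-1/4)=m(m-1)/2^{m-2}$, which is the same computation), the intermediate value argument for Part (ii), and the reduction of Parts (i) and (iii) to the two index-monotonicity statements, which are exactly Lemmas~\ref{decrease0} and~\ref{decrease4} already proved in the Appendix. The explicit factorization $R(\gamma)=(\gamma+1/4)\widetilde R(\gamma)$ is a cosmetic repackaging of the paper's direct sign analysis of $R$ on $(-1/4,\infty)$.
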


\begin{proof}
{\bf Part (i).}
We prove that $R(\gamma) < 0$, for $\gamma \in (-1/4,0)$. First, $R(\gamma)$ is an odd-degree polynomial and $R(\gamma) \rightarrow \infty$ as $\gamma \rightarrow \infty$ and $R(0) =0$ and $R'(0) < 0$.  Thus $R$ has a root $\gamma > 0$.

Assume $\gamma \in [-1/4,0)$ and $\beta < \beta^{(\dln=2)}_0$. Then
\begin{eqnarray*}
R(\gamma) &= & \beta (2^{2p}s_{2p+1}(\gamma) -1) - (2^{2q}s_{2q+1}(\gamma)-1)\\
& <&  \beta^{(\dln=2)}_0  (2^{2p}s_{2p+1}(\gamma) -1) - (2^{2q}s_{2q+1}(\gamma)-1)\, .
\end{eqnarray*}
To establish $R(\gamma) < 0$ it suffices to prove
\begin{equation*}
\beta^{(\dln=2)}_{0} \le \frac{2^{2q}s_{2q+1}(\gamma)-1}{2^{2p} s _{2p+1}(\gamma) - 1}, \qquad
\mbox{for $\gamma \in (-1/4,0]$.}
\label{g7}
\end{equation*}
This inequality is rewritten as
\begin{equation*}
\frac{2^{2p}s _{2p+1}(\gamma) - 1}{2^{2p}-1} \le \frac{2^{2q}s_{2q+1}(\gamma)-1}{2^{2q}-1},
\label{g8}
\end{equation*}
which follows from Lemma~\ref{decrease0}. Therefore $R(\gamma) < 0$ for $\gamma \in (-1/4,0]$.

\noindent
{\bf Part (ii).}
First,
\begin{eqnarray*}
R(0) &= & \beta (2^{2p} s_{2p+1}(0) - 1) - (2^{2q}s_{2q+1} - 1)
=  \beta (2^{2p}-1) - (2^{2q}-1) \\
& >& \beta_0^{(\dln=2)} (2^{2p} - 1) - (2^{2q} - 1) = 0.
\end{eqnarray*}
Next we show that $\lim_{\gamma \rightarrow -1/4^+} R'(\gamma) < 0$.
Indeed, for $\gamma > -1/4$, we have
\begin{eqnarray*}
R'(\gamma) &= & \beta \frac{2p+1}{\sqrt{1+4\gamma}} 2^{2p} (\psi_+^{2p} - \psi_-^{2p}) - 2^{2q}  (\psi_+^{2q} - \psi_-^{2q})\\
& <&  \beta_{-1/4}^{(n=2)} \frac{2p+1}{\sqrt{1+4\gamma}} \, 2^{2p} (\psi_+^{2p} - \psi_-^{2p}) - \frac{2q+1}{\sqrt{1+4\gamma}}  2^{2q}  (\psi_+^{2q} - \psi_-^{2q})
\end{eqnarray*}
as $\psi_+^2 > \psi_-^2 \ge 0$. The result follows from l'Hopital's rule, since
\begin{eqnarray*}
\lim_{\gamma \rightarrow -1/4^+}&&\!\!\!\!
\frac{(2q+1) 2^{2q} (\psi_+^{2q}(\gamma) - \psi_-^{2q}(\gamma))}{(2p+1)2^{2p} (\psi_+^{2p}(\gamma) - \psi_-^{2p}(\gamma))}\\
&&~~~~=\lim_{\gamma \rightarrow -1/4^+}
\frac{2q(2q+1) 2^{2q}\frac{1}{\sqrt{1+4\gamma}} (\psi_+^{2q-1}(\gamma) + \psi_-^{2q-1}(\gamma))}
{2p(2p+1) 2^{2p} \frac{1}{\sqrt{1+4\gamma}} (\psi_+^{2p-1}(\gamma) +  \psi_-^{2p-1}(\gamma))} \\
&&~~~~= \lim_{\gamma \rightarrow -1/4^+}
\frac{2q(2q+1) 2^{2q} s_{2q-1}(\gamma)}
{2p(2p+1) 2^{2p}  s_{2p-1}(\gamma)} \\
&&~~~~=  \frac{2q(2q+1) 2^{2q} 2^{-(2q-2)}}
{2p(2p+1) 2^{2p}  2^{-(2p-2)}} = \frac{2q(2q+1)}{2p(2p+1)}=\beta_{-1/4}^{(\dln=2)}.
\end{eqnarray*}

Thus $R(\gamma)< 0$ for $\gamma \in (-1/4,-1/4+\varepsilon)$, $\varepsilon > 0$, small. Since $R(0) > 0$, there exists $\gamma \in (-1/4,0)$ so that $R(\gamma) = 0$.

\noindent
{\bf Part (iii).}
We show that $R(\gamma) > 0$ for $\gamma > -1/4$. One has
\begin{eqnarray*}
R(\gamma) &= & \beta (2^{2p} s_{2p+1}(\gamma) - 1) - (2^{2q}s_{2q+1} (\gamma) - 1)\\
&>&  \beta_{-1/4}^{(\dln=2)} (2^{2p} s_{2p+1}(\gamma) - 1) - (2^{2q}s_{2q+1} (\gamma) - 1)\, .
\end{eqnarray*}
We show that
\begin{equation*}
\beta_{-1/4}^{(\dln=2)} = \frac{2q(2q+1)}{2p(2p+1)} \ge \frac{2^{2q}s_{2q+1} (\gamma) - 1}{2^{2p} s_{2p+1}(\gamma) - 1},
\label{h1}
\end{equation*}
which is equivalent to
$$
\frac{2^{2p}s_{2p+1}(\gamma) - 1}{2p(2p+1)} \ge \frac{2^{2q} s_{2q+1} (\gamma) - 1}{2q(2q+1)}.
$$
 This inequality follows from Lemma~\ref{decrease4}. Therefore $R(\gamma) = 0$ has no roots $\gamma > -1/4$ for $\beta > \beta_{-1/4}^{(\dln=2)}$.
\end{proof}

\section*{Appendix}

\begin{lemma}
Let $\alpha > 0$. The function
$$
g(x) = \frac{x\alpha^x}{\alpha^x - 1}
$$
is increasing on $(0,\infty)$.
\label{simple}
\end{lemma}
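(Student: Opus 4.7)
The plan is to show monotonicity by direct differentiation, reducing the sign of $g'(x)$ to the sign of an auxiliary function $\phi(x)$ that vanishes at the origin and whose derivative is easily seen to be positive on $(0,\infty)$.

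First I would compute the derivative using the quotient rule with $u = u(x) = \alpha^x$ so that $u' = u \ln\alpha$:
\begin{equation*}
g'(x) = \frac{(u + x u')(u-1) - x u\, u'}{(u-1)^2} = \frac{u\bigl[(u-1) - x\ln\alpha\bigr]}{(u-1)^2}.
\end{equation*}
Since $u = \alpha^x > 0$ and $(u-1)^2 > 0$ (assuming the implicit restriction $\alpha \neq 1$ so that $g$ is defined), the sign of $g'(x)$ is determined by
\begin{equation*}
\phi(x) := \alpha^x - 1 - x\ln\alpha.
\end{equation*}

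The key step is then to show $\phi(x) > 0$ for $x > 0$. Note $\phi(0) = 0$ and
\begin{equation*}
\phi'(x) = \alpha^x \ln\alpha - \ln\alpha = (\ln\alpha)(\alpha^x - 1).
\end{equation*}
I would split into the two nontrivial cases according to whether $\alpha > 1$ or $0 < \alpha < 1$. If $\alpha > 1$ then both $\ln\alpha > 0$ and $\alpha^x - 1 > 0$ on $(0,\infty)$, so $\phi' > 0$. If $0 < \alpha < 1$, both factors $\ln\alpha$ and $\alpha^x - 1$ are negative on $(0,\infty)$, so again $\phi' > 0$. In either case $\phi$ is strictly increasing on $(0,\infty)$ with $\phi(0) = 0$, hence $\phi(x) > 0$ there, giving $g'(x) > 0$.

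There is no real obstacle here; the only mild subtlety is keeping track of signs in the sub-unit case $0 < \alpha < 1$, where $u - 1 < 0$ and $\ln\alpha < 0$ both contribute, yet the product pattern in $\phi'$ still forces $\phi$ upward from $0$. The inequality $\phi(x) > 0$ is a one-line consequence that also admits the interpretation of $\alpha^x$ lying strictly above its tangent line at $x = 0$ (convexity of $\alpha^x$), which could be invoked as a shortcut if desired.
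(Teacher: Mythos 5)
Your proposal is correct and follows essentially the same route as the paper: both reduce the sign of $g'(x)$ to the elementary inequality $\alpha^x > 1 + x\ln\alpha$ for $x>0$, which the paper dispatches via the Taylor expansion of $e^y$ at $0$ while you verify it by monotonicity of $\phi(x)=\alpha^x-1-x\ln\alpha$ (and you note the tangent-line shortcut, which is the paper's argument). The extra care with the case $0<\alpha<1$ and the implicit exclusion of $\alpha=1$ is a welcome, if minor, addition.
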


\begin{proof}
The condition $g'(x) > 0$ is equivalent to
$\alpha^x = e^{x \ln \alpha} > 1 + x \ln \alpha$.
This follows  directly from the Taylor expansion of $e^x$ at $x = 0$ with equality reached for $x = 0$.
\end{proof}

\begin{lemma}
Let $a > b > 0$. Define
$$
f (n) = f_{a,b}(n) = \frac{n^{a-b} - 1}{n^a-1}.
$$
We define $f(1) = \lim_{n \rightarrow 1} f(n) = (a-b)/a$.
Then $f(n)$ is a decreasing function on $[1,\infty)$.
\label{thetalemma}
\end{lemma}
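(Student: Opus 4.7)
My plan is to show that $f'(n)<0$ for all $n>1$; combined with the continuity of $f$ at $n=1$ (which follows from the definition $f(1)=(a-b)/a$), this gives the required monotonicity on $[1,\infty)$. Computing the derivative,
$$
f'(n) = \frac{(a-b)n^{a-b-1}(n^a-1) - a n^{a-1}(n^{a-b}-1)}{(n^a-1)^2}\,,
$$
so $f'(n)<0$ for $n>1$ is equivalent to
$$
(a-b)n^{a-b-1}(n^a-1) < a n^{a-1}(n^{a-b}-1).
$$
Dividing by $n^{a-b-1}>0$ and expanding $an^{a-1}(n^{a-b}-1)=a(n^{2a-b-1}\cdot n^{b-(b-1)})\dots$ (or, more directly, multiplying $an^{a-1}$ through the bracket yields $a(n^{2a-b-1}-n^{a-1})$, then dividing by $n^{a-b-1}$ gives $a(n^a-n^b)$), this reduces to
$$
(a-b)(n^a-1) < a(n^a - n^b),
$$
and after elementary rearrangement to
\begin{equation}
b(n^a - 1) > a(n^b - 1), \qquad n>1. \label{plan:key}
\end{equation}

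The whole problem thus reduces to the single claim (\ref{plan:key}), namely that for fixed $n>1$ the function
$$
h(x) := \frac{n^x - 1}{x}, \qquad x>0,
$$
is strictly increasing (as (\ref{plan:key}) is exactly $h(a)>h(b)$ for $a>b>0$). To prove this I would compute
$$
h'(x) = \frac{\phi(x)}{x^2}, \qquad \phi(x) := x n^x \ln n - n^x + 1,
$$
observe that $\phi(0)=0$, and differentiate once more to get
$$
\phi'(x) = x n^x (\ln n)^2,
$$
which is strictly positive for $x>0$ and $n>1$ since $\ln n>0$. Hence $\phi(x)>0$ on $(0,\infty)$, so $h'(x)>0$ there, and $h$ is strictly increasing as required. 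This yields $f'(n)<0$ on $(1,\infty)$, and together with the continuous extension $f(1)=\lim_{n\to 1^+}f(n)=(a-b)/a$ one obtains that $f$ is decreasing on $[1,\infty)$.

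I expect no serious obstacle: the only subtlety is the algebraic reduction from $f'(n)<0$ to (\ref{plan:key}), which must be done carefully since several factors of $n$ and $(a-b)$ appear; after that the proof is a textbook calculation showing that $x\mapsto(e^{xt}-1)/x$ is increasing for $t>0$. An alternative, essentially equivalent presentation would be to write $h(x)=\ln n\cdot\frac{e^{x\ln n}-1}{x\ln n}$ and appeal to the monotonicity of $y\mapsto(e^y-1)/y$ on $(0,\infty)$, but the direct derivative computation above is self-contained and brief.
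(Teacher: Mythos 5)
Your proof is correct, and after the initial reduction it diverges from the paper's argument in an interesting way. Both you and the paper reduce $f'(n)<0$ to the same key inequality $a(n^b-1)<b(n^a-1)$ for $n>1$ (the paper's \eq{abeq}). The paper then proves this by showing that $n\mapsto (n^a-1)/(n^b-1)$ is increasing in $n$ on $[1,\infty)$ starting from the limiting value $a/b$, which in turn is reduced to the monotonicity in the exponent of $x\mapsto x\alpha^x/(\alpha^x-1)$ (its Lemma~\ref{simple}, proved via $e^{x\ln\alpha}>1+x\ln\alpha$). You instead read the key inequality directly as $h(a)>h(b)$ for $h(x)=(n^x-1)/x$ with $n$ fixed, and establish monotonicity of $h$ by a clean two-step differentiation ($\phi(0)=0$, $\phi'(x)=xn^x(\ln n)^2>0$). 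Your route is one reduction shorter and avoids the auxiliary monotonicity-in-$n$ statement entirely; the paper's route has the side benefit that Lemma~\ref{simple} is stated as a standalone tool. The only cosmetic blemish in your write-up is the garbled intermediate line in the algebraic reduction (the expression $a(n^{2a-b-1}\cdot n^{b-(b-1)})$), but the parenthetical correction arrives at the right identity $an^{a-1}(n^{a-b}-1)/n^{a-b-1}=a(n^a-n^b)$, so the reduction stands.
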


\begin{proof}
The inequality $f'(n) < 0$  is equivalent to $a(n^b - 1) < b (n^a- 1)$, i.e.,
\begin{equation}
\frac{a}{b} < \frac{n^a-1}{n^b-1}\, .
\label{abeq}
\end{equation}
The estimate \eq{abeq} for $n > 1$  follows from the fact that the function
$$
h(n) = \frac{n^a - 1}{n^b-1}, \qquad a > b > 0,
$$
is increasing on $[1,\infty)$, where
$h(1) = \lim_{n\rightarrow 1} h(n) = a/b$.
The inequality $h'(n) > 0$ reduces to
$$
\frac{an^a}{n^a-1} > \frac{b n^b}{n^b - 1},
$$
which holds for $a > b > 0$ and $n > 1$ by Lemma~\ref{simple}.
Lemma~\ref{thetalemma} follows by continuity of $h(n)$ at $n=1$.
\end{proof}

\begin{lemma}
Let $s_m(\gamma)$ be as above. Then
\begin{eqnarray}
 s_m(\gamma) & \ge &  2^{-(m-1)}, \qquad  \mbox{for all $\gamma \ge -1/4$ and $m \ge 0$,}
\label{sest1} \\
s_m(\gamma) & < & 1, \qquad \quad \quad \  \ \, \mbox{for all $\gamma \in [-1/4,0)$ and $m \ge 2$,}
\label{sest2a} \\
s_m (\gamma) & > & 1, \qquad \quad \quad\  \ \, \mbox{for all $\gamma >0$ and $m \ge 2$.}
\label{sest3a}
\end{eqnarray}
\label{slemma}
\end{lemma}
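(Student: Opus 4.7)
The plan is to derive all three inequalities from a single monotonicity statement for $s_m(\gamma)$ on $[-1/4,\infty)$, combined with the already-recorded endpoint values $s_m(-1/4) = 2^{1-m}$ and $s_m(0) = 1$ from \eqref{lemma2}. The edge cases $m = 0, 1$ of \eqref{sest1} are immediate since $s_0 \equiv 2$ and $s_1 \equiv 1$, so throughout I would focus on $m \ge 2$.

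The first step is to extend the monotonicity relation \eqref{growths}, which as stated in the text only covers $(-1/4, 0)$, to the full half-line $\gamma \in (-1/4, \infty)$. From the closed form $s_m(\gamma) = \psi_+^m + \psi_-^m$ I use
$$
s_m'(\gamma) = \frac{m}{\sqrt{1+4\gamma}}\bigl(\psi_+^{m-1} - \psi_-^{m-1}\bigr),
$$
together with $\psi_+ + \psi_- = 1$ and $\psi_+ - \psi_- = \sqrt{1+4\gamma} > 0$. A short case split on the parity of $m-1$, using the basic observation that $\psi_+ > |\psi_-|$ (immediate from $\psi_+ + \psi_- = 1$: if $\psi_- \ge 0$ then $\psi_+ > \psi_- = |\psi_-|$; if $\psi_- < 0$ then $\psi_+ = 1 - \psi_- > -\psi_- = |\psi_-|$), then yields $\psi_+^{m-1} - \psi_-^{m-1} > 0$. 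Hence $s_m'(\gamma) > 0$ on all of $(-1/4, \infty)$ for $m \ge 2$, and by continuity $s_m$ is strictly increasing on $[-1/4, \infty)$.

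With monotonicity in hand, the three inequalities are essentially corollaries. For \eqref{sest2a}, given $\gamma \in [-1/4, 0)$ and $m \ge 2$, monotonicity gives $s_m(\gamma) < s_m(0) = 1$. For \eqref{sest3a}, given $\gamma > 0$ and $m \ge 2$, the same monotonicity gives $s_m(\gamma) > s_m(0) = 1$. For \eqref{sest1} with $m \ge 2$, monotonicity yields $s_m(\gamma) \ge s_m(-1/4) = 2^{1-m}$, while $m = 0, 1$ are handled by inspection.

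The main obstacle, such as it is, lies in the extension of monotonicity to $\gamma \ge 0$: the derivative formula behaves differently in the regime $\psi_- < 0$ (where $\gamma > 0$), and one must rule out cancellation in $\psi_+^{m-1} - \psi_-^{m-1}$. The estimate $\psi_+ > |\psi_-|$ above handles this cleanly. As a cross-check, the inequality \eqref{sest3a} can also be obtained independently by induction on $m$ from the recurrence \eqref{rec:s}: if $s_{m-1}, s_m \ge 1$ and $\gamma > 0$, then $s_{m+1} = s_m + \gamma s_{m-1} > 1$, with base cases $s_1 = 1$ and $s_2 = 1 + 2\gamma > 1$; and \eqref{sest1} for $m \ge 2$ similarly admits a direct inductive proof starting from $s_2(-1/4) = 1/2$. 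The monotonicity route is preferred because it treats all three bounds uniformly.
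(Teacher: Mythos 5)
Your argument is correct, and it overlaps with the paper's proof on the first inequality but diverges on the other two. For \eq{sest1} the paper does exactly what you do: it asserts $s_m'(\gamma)=\frac{m}{\sqrt{1+4\gamma}}\bigl(\psi_+^{m-1}-\psi_-^{m-1}\bigr)>0$ for all $\gamma\ge -1/4$ and evaluates at the left endpoint $s_m(-1/4)=2^{1-m}$; in fact you are more careful than the paper here, since the text only records monotonicity on $(-1/4,0)$ in \eq{growths} and the lemma's proof extends it to $\gamma\ge -1/4$ without comment, whereas your observation $\psi_+>|\psi_-|$ is precisely what is needed to rule out sign trouble when $\psi_-<0$ (i.e.\ $\gamma>0$). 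For \eq{sest2a} the paper instead uses monotonicity in the index $m$: since $\psi_\pm\in(0,1)$ for $\gamma\in[-1/4,0)$, the sequence $s_{m+1}(\gamma)<s_m(\gamma)$ decreases, and the claim follows from $s_1=1$; you use monotonicity in $\gamma$ up to the right endpoint $s_m(0)=1$ from \eq{lemma2}. For \eq{sest3a} the paper runs an induction on the recurrence \eq{rec:s} from the base cases $s_2=1+2\gamma$ and $s_3=1+3\gamma$; you again appeal to monotonicity in $\gamma$ past $s_m(0)=1$ (and note the induction as an alternative). The net effect is the same set of bounds; your version buys uniformity (one monotonicity statement plus the two endpoint values does everything), at the cost of having to justify the derivative's sign on the full half-line, while the paper's per-inequality arguments for \eq{sest2a} and \eq{sest3a} avoid that extension entirely. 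No gaps.
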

\vspace*{-0.3in}
\begin{proof}
\sloppypar First, for $\gamma\geq -1/4$,
$s_m(\gamma)$ is an increasing function of $\gamma$ since
$s_m'(\gamma) = (m/\sqrt{1+4\gamma}) \left( \psi_+^{m-1}(\gamma) - \psi_-^{m-1} (\gamma)\right) > 0$.
The inequality \eq{sest1} follows from this and $s_m(-1/4)=2^{1-m}$.

Equation  \eq{sest2a} follows from the fact that $\psi_{\pm} \in (0,1)$ for $\gamma \in [-1/4, 0)$. Hence $s_{m+1}(\gamma) < s_m(\gamma)$ for all $m \ge 0$. Then $s_1(\gamma) = 1$ yields the claim.

Finally, we prove \eq{sest3a}. For $m=2$ and $m=3$, $s_2(\gamma) = 1 + 2\gamma > 1$, and $s_3(\gamma) = 1 + 3 \gamma > 1$ for $\gamma >0$. Then \eq{sest3a} follows directly from \eq{rec:s}.
\end{proof}

\begin{lemma}
For all $m \ge 0$ and $\gamma \ge -1/4$,
\begin{eqnarray}
s_{m+2}(\gamma) & \ge& -\gamma s_m(\gamma),
\label{db} \\
s_{m+1}(\gamma) &\ge& s_{m}(\gamma)/2,
\label{db1} \\
s_{m+1}(\gamma) &\le& \left[1+m(1+4\gamma)\right]s_m (\gamma)/2.
\label{TP1}
\end{eqnarray}
\label{doubling}
\end{lemma}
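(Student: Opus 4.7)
The plan is to exploit the explicit representation $s_m(\gamma) = \psi_+^m + \psi_-^m$ from \eqref{s:exp} together with the companion sequence $d_m(\gamma) := \psi_+^m - \psi_-^m$. Setting $r := \sqrt{1+4\gamma} = \psi_+ - \psi_-$, one has $\psi_\pm = (1 \pm r)/2$, and the hypothesis $\gamma \ge -1/4$ becomes $r \ge 0$.

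The starting observation is that $d_m(\gamma) \ge 0$ for every $m \ge 0$ and $\gamma \ge -1/4$. Since $\psi_+ + \psi_- = 1$ and $\psi_+ \ge 1/2$, we have $\psi_+ \ge |\psi_-|$; a short case analysis on the sign of $\psi_-$ (negative only for $\gamma > 0$) and the parity of $m$ then gives $\psi_+^m \ge \psi_-^m$. The one algebraic identity I will use repeatedly is
\begin{equation*}
s_m - \tfrac{1}{2}\, s_{m-1} \;=\; \tfrac{r}{2}\, d_{m-1},
\end{equation*}
which follows from $\psi_\pm - 1/2 = \pm r/2$. Combined with $r \ge 0$ and $d_{m-1} \ge 0$ this already proves \eqref{db1}.

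For \eqref{db} the plan is to use the recurrence \eqref{rec:s} to rewrite $s_{m+2} + \gamma s_m = s_{m+1} + 2\gamma s_m$. If $\gamma \ge 0$ the right-hand side is manifestly non-negative. If $\gamma \in [-1/4,0)$, the just-established \eqref{db1} gives $s_{m+1} \ge s_m/2 \ge 2|\gamma|\, s_m = -2\gamma\, s_m$, so the right-hand side is again non-negative, yielding $s_{m+2} \ge -\gamma s_m$.

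The main obstacle is \eqref{TP1}. I would reduce it to the single scalar inequality
$h_m(r) := m r\, s_m - d_m \ge 0$ on $r \ge 0$.
Using $r^2 = 1+4\gamma$ and the equivalent form $2s_{m+1} - s_m = r\, d_m$ of the identity above, a direct computation gives
\begin{equation*}
\bigl[1 + m(1+4\gamma)\bigr]\, s_m - 2\, s_{m+1} \;=\; r\bigl(m r\, s_m - d_m\bigr) \;=\; r\, h_m(r),
\end{equation*}
so \eqref{TP1} is equivalent to $h_m(r) \ge 0$. Since $\psi_\pm'(r) = \pm 1/2$, we obtain $s_m'(r) = \tfrac{m}{2}\, d_{m-1}$ and $d_m'(r) = \tfrac{m}{2}\, s_{m-1}$, so applying the identity a second time,
\begin{equation*}
h_m'(r) \;=\; m\bigl(s_m - \tfrac{1}{2}\, s_{m-1}\bigr) + \tfrac{m^2 r}{2}\, d_{m-1} \;=\; \tfrac{m(m+1)\, r}{2}\, d_{m-1} \;\ge\; 0\qquad (r \ge 0).
\end{equation*}
Together with $h_m(0) = 0$ (evaluate directly at $\gamma = -1/4$, where $s_m = 2^{1-m}$ and $d_m = 0$), this establishes $h_m(r) \ge 0$ on $[0,\infty)$, hence \eqref{TP1}. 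The reformulation in terms of $h_m(r)$ is the key step: a direct induction on \eqref{rec:s} stumbles on the unfavourable sign of $2\gamma s_{m-1}$ for $\gamma < 0$, whereas the monotonicity argument via $h_m'(r)$ collapses the proof to the already-established positivity of $d_{m-1}$.
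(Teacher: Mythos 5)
Your proof is correct, and while your treatment of \eq{db} and \eq{db1} essentially mirrors the paper's (both rest on the identity $s_{m+1}-\tfrac12 s_m=\tfrac{r}{2}(\psi_+^m-\psi_-^m)$ with $r=\sqrt{1+4\gamma}$, and on reducing \eq{db} to \eq{db1} via the recurrence \eq{rec:s}), your argument for \eq{TP1} is genuinely different. The paper splits into the cases $\gamma\ge 0$ (handled by the recurrence and \eq{db1}) and $\gamma\in[-1/4,0)$ (reduced to $\psi_+^m-\psi_-^m\le m\sqrt{1+4\gamma}\,(\psi_+^m+\psi_-^m)$ and then proved by an induction on $m$ through the ratio inequality \eq{j3}, whose induction step \eq{j4} requires clearing several denominators). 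You instead reparametrize by $r$, observe that $s_m$ and $d_m:=\psi_+^m-\psi_-^m$ are polynomials in $r$ with $s_m'=\tfrac{m}{2}d_{m-1}$ and $d_m'=\tfrac{m}{2}s_{m-1}$, and collapse \eq{TP1} for all $\gamma\ge-1/4$ at once to $h_m(r):=mrs_m-d_m\ge0$, which follows from $h_m(0)=0$ and $h_m'(r)=\tfrac{m(m+1)r}{2}d_{m-1}\ge0$. This buys a uniform treatment of both sign regimes of $\gamma$, eliminates the induction entirely, and sidesteps the $1/\sqrt{1+4\gamma}$ singularity at $\gamma=-1/4$ that appears in the paper's derivative formula \eq{growths}; the only ingredient is the elementary positivity $d_{m}\ge0$, which you justify correctly via $\psi_+\ge|\psi_-|$. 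The paper's route, by contrast, stays closer to the recurrence and avoids any calculus. Both are complete; yours is arguably the cleaner of the two for \eq{TP1}.
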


\vspace*{-0.3in}
\begin{proof}
The inequality  \eq{db} is equivalent to
$s_{m+2} - s_{m+1} + (s_{m+1} + \gamma s_m) \ge 0$. Using the recurrence relation \eq{rec:s}, it reduces to
$2s_{m+2} - s_{m+1}\ge 0$, i.e., $2s_{m+2} \ge  s_{m+1}$, $m \ge 0$.
Thus \eq{db} and \eq{db1} are equivalent except for \eq{db1} with $m = 0$, which is trivially satisfied ($2s_1 = 2 = s_0$).
Also note that $s_m(\gamma) \ge 0$ for $m \ge 0$ and $\gamma \ge 0$ and \eq{db} is satisfied for $\gamma \ge 0$. In the rest of the proof of \eq{db}, we assume that $m \ge 1$ and $\gamma \in [-1/4,0)$.
We shift $m \rightarrow m+1$ in \eq{db1}, $m \ge 0$, which becomes
\begin{equation}
\left(\psi_+ - \frac{1}{2}\right) \psi_+^{m+1} +\left(\psi_- - \frac{1}{2}\right)\psi_-^{m+1} \ge 0 \, .
\label{phi2}
\end{equation}
Since $\psi_- = 1-\psi_+$ for $\gamma \in [-1/4,0)$, \eq{phi2} is equivalent to
$$
\left(\psi_+ - \frac{1}{2}\right) \left[ \psi_+^{m+1} - \psi_-^{m+1} \right] \ge 0  \, ,
$$
which is satisfied for $\gamma \in [-1/4,0)$ since $\psi_+\geq 1/2$  and $\psi_+>\psi_-$. This proves \eq{db1} and \eq{db}.

We turn to \eq{TP1}.
Note that \eq{TP1} holds for $m=0$.
For $m \ge 1$, first we consider $\gamma \ge 0$. Using \eq{rec:s},
$$
2(s_m + \gamma s_{m-1}) \le \left[m(1+4\gamma) + 1\right] s_m,
$$
i.e.,
\begin{equation}
2\gamma s_{m-1} \le \left[m(1+4\gamma) - 1\right] s_m = (m-1) s_m + 4m\gamma s_m.
\label{TP2}
\end{equation}
But $m\ge 1$ and $s_m \ge 0$. Therefore $(m-1)s_m \ge 0$ and \eq{TP2} follows from $2\gamma s_{m-1} \le 4m \gamma s_m$, i.e., $s_{m} \ge s_{m-1}/2m$, which holds, according to \eq{db1}.

Next, consider $\gamma \in [-1/4,0)$. We write \eq{TP1} as
$2s_{m+1} - s_m \le m (1+4\gamma) s_m$,
and use \eq{s:exp} to obtain
\begin{equation*}
\psi_+^m \left(\psi_+ - \frac{1}{2}\right) + \psi_-^m \left( \psi_--\frac{1}{2}\right)  \le \frac{m (1+4\gamma)}{2} (\psi_+^m + \psi_-^m)\, .
\label{j1}
\end{equation*}
Using $\psi_+ + \psi_- = 1$,
$$
\left(\psi_+ - \frac{1}{2}\right) (\psi_+^m - \psi_-^m) \le  \frac{m (1+4\gamma)}{2} (\psi_+^m + \psi_-^m)\, .
$$
Since
$$
\psi_+ - \frac{1}{2} = \frac{\sqrt{1+4\gamma}}{2},
$$
Equation~\eq{TP1} is equivalent to
$$
(\psi_+^m - \psi_-^m) \le  m \sqrt{1+4\gamma}(\psi_+^m + \psi_-^m),
$$
or
\begin{equation}
\psi_+^m \left( 1 - m \sqrt{1+4\gamma}\right) \leq  \psi_-^m \left( 1 + m \sqrt{1+4\gamma}\right).
\label{j2}
\end{equation}
Both $\psi_+$ and $1 + m\sqrt{1+4\gamma}$ are positive, and
$$
\frac{\psi_-}{\psi_+} = \frac{1-\sqrt{1+4\gamma}}{1+\sqrt{1+4\gamma}} = \frac{1+2\gamma - \sqrt{1+4\gamma}}{-2\gamma}.
$$
It follows that proving
\eq{j2} is equivalent to proving
\begin{equation}
\frac{1-m\sqrt{1+4\gamma}}{1 +m\sqrt{1+4\gamma}} \le \left( \frac{1+2\gamma - \sqrt{1+4\gamma}}{-2\gamma}\right)^m \, .
\label{j3}
\end{equation}

We prove \eq{j3} by induction for $m \ge 0$. For $m = 0$, \eq{j3} is trivially satisfied. Assume that \eq{j3} holds for $m$. Using this, we have to show that \eq{j3} holds for $m+1$. This amounts to showing that
\begin{equation}
\frac{1-m\sqrt{1+4\gamma}}{1 +m\sqrt{1+4\gamma}} \, \frac{1+2\gamma - \sqrt{1+4\gamma}}{-2\gamma}\ge  \frac{1-(m+1)\sqrt{1+4\gamma}}
{1 +(m+1)\sqrt{1+4\gamma}}.
\label{j4}
\end{equation}
Multiplying \eq{j4} by all (positive) denominators simplifies to an inequality which holds for all $\gamma \in [-1/4,0)$:
\begin{equation*}
m(m+1)(1+4\gamma)^{3/2}\left( 1 - \sqrt{1+4\gamma}\right) \ge 0.
\end{equation*}
\end{proof}

\begin{lemma}
For all $m\geq 2$,
\begin{eqnarray}
-\gamma (2^m - 1)s_{m-1}(\gamma)  + s_{m+1} (\gamma) &\ge& 1\, ,
\
\mbox{for $\gamma\in [-1/4,0]$.}
\label{gamest} \\
-\gamma (2^m - 1)s_{m-1}(\gamma)  + s_{m+1} (\gamma) &\le& 1\, ,
\
\mbox{for $\gamma \ge 0$.}
\label{gamest2}
\end{eqnarray}
\label{gamlemma}
\end{lemma}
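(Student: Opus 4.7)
The plan is to introduce $F_m(\gamma) := -\gamma(2^m-1)\,s_{m-1}(\gamma) + s_{m+1}(\gamma)$, so that the lemma asserts $F_m \geq 1$ on $[-1/4, 0]$ and $F_m \leq 1$ on $[0, \infty)$. Two quick boundary checks make the statement transparent: $F_m(0) = s_{m+1}(0) = 1$ by \eqref{lemma2}, and
\[
F_m(-1/4) = \tfrac{1}{4}(2^m-1)\cdot 2^{2-m} + 2^{-m} = 1
\]
using $s_k(-1/4) = 2^{1-k}$. So both inequalities are sharp at both endpoints, and the real content of the lemma is that $F_m - 1$ does not change sign in either subinterval.

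The base case $m = 2$ is in fact an identity: with $s_1 = 1$ and $s_3(\gamma) = 1 + 3\gamma$,
\[
F_2(\gamma) = -3\gamma + (1 + 3\gamma) = 1,
\]
so $F_2 \equiv 1$ and both \eqref{gamest} and \eqref{gamest2} hold trivially at $m=2$.

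For the inductive step I would apply the recurrence $s_{m+2} = s_{m+1} + \gamma s_m$ to rewrite
\[
F_{m+1} = s_{m+2} - \gamma(2^{m+1}-1)\,s_m = s_{m+1} - 2\gamma(2^m-1)\,s_m,
\]
and subtract from $F_m = s_{m+1} - \gamma(2^m-1)\,s_{m-1}$ to obtain the telescoping identity
\[
F_{m+1}(\gamma) - F_m(\gamma) = \gamma\,(2^m-1)\,\bigl(s_{m-1}(\gamma) - 2\,s_m(\gamma)\bigr).
\]
By (an index shift of) the doubling estimate \eqref{db1} in Lemma~\ref{doubling}, $s_{m-1} - 2\,s_m \leq 0$ for every $\gamma \geq -1/4$ and $m \geq 1$, while $2^m - 1 > 0$. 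Hence $\sign(F_{m+1} - F_m) = -\sign(\gamma)$: on $[-1/4, 0]$ the sequence $(F_m)_{m\geq 2}$ is non-decreasing, and on $[0, \infty)$ it is non-increasing. Combined with $F_2 \equiv 1$, an immediate induction yields both \eqref{gamest} and \eqref{gamest2}.

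The only creative step is identifying the right telescoping: once one notices that expanding $F_{m+1}$ with the $s$-recurrence cancels the $s_{m+1}$ summand against the one in $F_m$, the difference reduces exactly to the already-established doubling inequality, and there is no further obstacle. In particular, no delicate manipulation of the explicit $\psi_\pm$ representation is needed, so the proof is essentially self-contained given Lemma~\ref{doubling}.
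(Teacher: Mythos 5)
Your proof is correct. It is built from the same two ingredients as the paper's proof --- induction on $m$ anchored at $m=2$, plus the doubling inequality of Lemma~\ref{doubling} --- but you package them differently, and your packaging is cleaner. The paper proves \eqref{gamest} and \eqref{gamest2} by two separate inductions: in each step it takes the inequality $s_m+\gamma s_{m-2}\ge 0$ (i.e.\ \eqref{db}), rewrites it via the recurrence \eqref{rec:s} as $2^m s_{m-1}+\gamma(2^{m+1}-1)s_{m-2}-s_m\ge 0$, multiplies by $-\gamma$, and adds the induction hypothesis; it also checks both $m=2$ and $m=3$ as base cases. Your telescoping identity
\[
F_{m+1}(\gamma)-F_m(\gamma)=\gamma\,(2^m-1)\bigl(s_{m-1}(\gamma)-2s_m(\gamma)\bigr)
\]
is exactly what that manipulation computes, but stated once it handles both signs of $\gamma$ simultaneously, reduces the base case to the single identity $F_2\equiv 1$, and makes visible why the two inequalities are mirror images of each other across $\gamma=0$ and why both are equalities at $\gamma=0$ and $\gamma=-1/4$. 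The only thing to be careful about is the equivalence you invoke: \eqref{db1} with $m\to m-1$ gives $s_{m-1}-2s_m\le 0$ for $m\ge 1$ and $\gamma\ge -1/4$, which is precisely what the sign argument needs, so there is no gap. In short: same key lemma, same inductive skeleton, but a genuinely tidier decomposition that would shorten the paper's Appendix proof.
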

\begin{proof}
We prove\eq{gamest} using induction. For $m=2$ and $m=3$
\begin{eqnarray*}
-\gamma (2^2 - 1) s_1(\gamma) + s_3(\gamma)  =  -3\gamma + 1 + 3\gamma & =&  1, \\
-\gamma(2^3 - 1)s_2(\gamma) + s_4(\gamma)   =  1 - 3\gamma(1+4\gamma) & \ge & 1.
\end{eqnarray*}

Assume \eq{gamest} holds for some $m\ge 3$, i.e.,
\begin{equation}
-\gamma(2^m -1)s_{m-1} + s_{m+1} \ge 1.
\label{ind1}
\end{equation}
By Lemma~\ref{doubling}, $s_{m} + \gamma s_{m-2} \ge 0$.  Using \eq{rec:s} this becomes $s_{m-1} + 2 \gamma s_{m-2} \ge 0$.
After multiplication by $2^m - 1 >0$, we obtain the equivalent form
$$
(2^m - 1) s_{m-1} + 2 \gamma (2^{m}-1) s_{m-2}  =
(2^m - 1) s_{m-1} + \gamma (2^{m+1}-2) s_{m-2}  \ge 0,
$$
which, using \eq{rec:s}, is rewritten as
\begin{equation}
2^{m} s_{m-1} + \gamma (2^{m+1} - 1)  s_{m-2} - s_{m} \ge 0.
\label{ind2}
\end{equation}
Multiplying \eq{ind2} by $-\gamma \ge 0$ and adding \eq{ind1} gives
$$
-\gamma(2^{m+1} -1) (s_{m-1} + \gamma s_{m-2}) + ( s_{m+1}  + \gamma s_m) \ge 1,
$$
which is rewritten as
$$
-\gamma(2^{m+1} -1) s_m +s_{m+2}\ge 1\, .
$$
This concludes the proof of the second induction step.

Next we prove \eq{gamest2}. The statement is true for $m = 2$ and $m=3$:
$$
-\gamma (2^2 - 1) s_1(\gamma) + s_3(\gamma)  = 1\, , \qquad \quad
-\gamma(2^3 - 1)s_2(\gamma) + s_4(\gamma) = 1 - 3\gamma - 12 \gamma^2 \le 1.
$$
Assume \eq{gamest2} holds for some $m\ge 3$, i.e.,
\begin{equation}
-\gamma(2^m -1)s_{m-1} + s_{m+1} \le 1 .
\label{ind10}
\end{equation}
By Lemma~\ref{doubling}, $s_{m} + \gamma s_{m-2} \ge 0$ or equivalently $s_{m-1} + 2 \gamma s_{m-2} \ge 0$, so that
$$
(2^m - 1) s_{m-1} + 2 \gamma (2^{m}-1) s_{m-2}  =
(2^m - 1) s_{m-1} + \gamma (2^{m+1}-2) s_{m-2}  \ge 0.
$$
This is rewritten as
$$
2^{m} s_{m-1} + \gamma (2^{m+1} - 1)  s_{m-2} - s_{m} \ge 0.
$$
We reverse this inequality by multiplying it by $-\gamma \le 0$, and add \eq{ind10} to it to obtain
$$
-\gamma(2^m -1)s_{m-1} + s_{m+1}
-\gamma 2^m s_{m-1} - \gamma (2^{m+1}-1) \gamma s_{m-2} + \gamma s_{m} \le 1,
$$
which reduces to
$$
-\gamma(2^{m+1} -1) s_m +s_{m+2}\le 1\, .
$$
This concludes the proof of the second induction step.
\end{proof}

\begin{lemma}
The sequence
$$
\frac{2^m s_{m+1} (\gamma) - 1}{2^m -1}, \qquad \qquad m \ge 1,
$$
is non-increasing in $m$ for $\gamma \in [-1/4,0]$.
\label{decrease0}
\end{lemma}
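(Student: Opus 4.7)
The plan is to reduce the monotonicity claim to a single pointwise inequality and then establish that inequality by a telescoping argument that invokes Lemma~\ref{doubling}.

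\textbf{Step 1 (Reduction).} Write $a_m := 2^m s_{m+1}(\gamma) - 1$ and $b_m := 2^m - 1$, so $b_m > 0$ for $m \ge 1$. The desired inequality $a_{m+1}/b_{m+1} \le a_m/b_m$ is equivalent to $a_m b_{m+1} - a_{m+1} b_m \ge 0$. I would expand this and then use the recurrence $s_{m+2} = s_{m+1} + \gamma s_m$ to simplify. A direct computation yields
$$
a_m b_{m+1} - a_{m+1} b_m \;=\; 2^m\bigl[\,s_{m+1}(\gamma) - 2\gamma(2^m-1)\,s_m(\gamma) - 1\,\bigr].
$$
Thus it suffices to prove $U_m(\gamma) \ge 1$ for all $m \ge 1$ and $\gamma \in [-1/4,0]$, where
$$
U_m(\gamma) \;:=\; s_{m+1}(\gamma) - 2\gamma(2^m-1)\,s_m(\gamma).
$$

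\textbf{Step 2 (Base case).} Directly, $U_1(\gamma) = s_2(\gamma) - 2\gamma\, s_1(\gamma) = (1+2\gamma) - 2\gamma = 1$, so the inequality holds with equality at $m=1$.

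\textbf{Step 3 (Telescoping).} Next I would compute the one-step increment $U_{m+1}-U_m$. Using $s_{m+2} - s_{m+1} = \gamma s_m$ and collecting, a short calculation gives
$$
U_{m+1}(\gamma) - U_m(\gamma) \;=\; \gamma\,(2^{m+1}-1)\bigl[\,s_m(\gamma) - 2\,s_{m+1}(\gamma)\,\bigr].
$$
By Lemma~\ref{doubling}, equation~\eqref{db1}, $s_{m+1}(\gamma) \ge s_m(\gamma)/2$ for $\gamma \ge -1/4$, i.e., $s_m - 2s_{m+1} \le 0$. Combined with $\gamma \le 0$, this yields $U_{m+1}-U_m \ge 0$. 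Induction from the base case $U_1 = 1$ then gives $U_m(\gamma) \ge 1$ for all $m \ge 1$, completing the proof.

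\textbf{Main obstacle.} The substantive work is already packaged into Lemma~\ref{doubling}; the difficulty here is recognizing the right auxiliary quantity $U_m$ and spotting that its first difference factors cleanly into a product whose sign is controlled by $\gamma \le 0$ and the already-established doubling inequality $2s_{m+1} \ge s_m$. Once the reduction in Step~1 is carried out, the rest is essentially mechanical.
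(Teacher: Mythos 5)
Your proposal is correct, and the computations check out: the cross-multiplied difference does equal $2^m\bigl[s_{m+1}-2\gamma(2^m-1)s_m-1\bigr]$, and the first difference of $U_m$ does factor as $\gamma(2^{m+1}-1)\bigl[s_m-2s_{m+1}\bigr]$, which is nonnegative for $\gamma\in[-1/4,0]$ by \eqref{db1}. The route is genuinely different from the paper's in its second half. The paper performs the same reduction — via \eqref{rec:s} the target inequality $s_{m+1}\ge \gamma(2^{m+1}-2)s_m+1$ is rewritten as $s_{m+2}-\gamma(2^{m+1}-1)s_m\ge 1$, which is exactly your $U_m\ge 1$ — but then simply cites Lemma~\ref{gamlemma}, whose inequality \eqref{gamest} is proved separately by an induction that multiplies an auxiliary inequality by $-\gamma$ and adds it to the induction hypothesis. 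You instead prove $U_m\ge1$ on the spot by observing that its increment telescopes into a single signed product controlled by the doubling inequality $2s_{m+1}\ge s_m$. Both arguments ultimately rest on the same ingredient from Lemma~\ref{doubling} (the paper uses the equivalent form \eqref{db}), but yours is more self-contained for this particular lemma and arguably cleaner; the paper's choice to isolate Lemma~\ref{gamlemma} buys reusability, since the companion inequality \eqref{gamest2} for $\gamma\ge0$ is needed elsewhere (in Lemma~\ref{increase2}), whereas your telescoping identity would need the reversed sign discussion there.
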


\begin{proof}
We prove that for $m \ge 1$,
\begin{equation*}
\frac{2^m s_{m+1} - 1}{2^m - 1} \ge
\frac{2^{m+1} s_{m+2} - 1}{2^{m+1} - 1} \, .
\label{mono}
\end{equation*}
Using the recurrence relation~\eqref{rec:s}, this is equivalent to
$$
s_{m+1} \ge \gamma (2^{m+1} - 2) s_{m} + 1 ~~\iff~~s_{m+2} - \gamma (2^{m+1}-1) s_{m} \ge 1,
$$
which follows directly from  Lemma~\ref{gamlemma}.
\end{proof}

\begin{lemma}
The sequence
$$
\frac{2^m s_{m+1} (\gamma) - 1}{m(m+1)}, \qquad \qquad m \ge 1,
$$
is nondecreasing in $m$ for $\gamma \ge -1/4$.
\label{decrease4}
\end{lemma}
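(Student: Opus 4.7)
The plan is to reduce the monotonicity claim $a_{m+1} \ge a_m$, where $a_m(\gamma) := (2^m s_{m+1}(\gamma)-1)/(m(m+1))$, to a manifestly nonnegative sum via the reparametrization $t := \sqrt{1+4\gamma}$, which is real and nonnegative precisely when $\gamma \ge -1/4$. Under this substitution $2\psi_\pm = 1 \pm t$, so by the closed form \eq{s:exp}
$$
2^m s_{m+1}(\gamma) = \tfrac{1}{2}\bigl[(1+t)^{m+1} + (1-t)^{m+1}\bigr] = \sum_{\substack{k \text{ even} \\ 0 \le k \le m+1}} \binom{m+1}{k} t^k,
$$
and in particular $2^m s_{m+1}(\gamma) - 1 = \sum_{k \text{ even},\,k\ge 2} \binom{m+1}{k} t^k$, a polynomial in $t$ whose monomials $t^k$ are nonnegative for all real $t$.

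Clearing the positive factor $m(m+1)(m+2)$, the monotonicity claim becomes equivalent to $N_m(\gamma) \ge 0$, where
$$
N_m(\gamma) := m\bigl(2^{m+1}s_{m+2}(\gamma) - 1\bigr) - (m+2)\bigl(2^m s_{m+1}(\gamma) - 1\bigr).
$$
Substituting the two binomial expansions and collecting powers of $t^k$ gives
$$
N_m(\gamma) = \sum_{\substack{k \text{ even} \\ k \ge 2}} \Bigl[m\binom{m+2}{k} - (m+2)\binom{m+1}{k}\Bigr] t^k,
$$
and Pascal's identity $\binom{m+2}{k} = \binom{m+1}{k} + \binom{m+1}{k-1}$ collapses each bracket to $m\binom{m+1}{k-1} - 2\binom{m+1}{k}$.

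The main---and only nontrivial---step is to verify that each of these coefficients is nonnegative whenever $k \ge 2$. A direct ratio computation yields $m\binom{m+1}{k-1}/\binom{m+1}{k} = mk/(m+2-k)$, so the desired inequality reduces to $mk \ge 2(m+2-k)$, or equivalently $k(m+2) \ge 2(m+2)$, which holds exactly when $k \ge 2$ (with equality at $k=2$). Combined with $t^k \ge 0$ for even $k$ and arbitrary real $t$, this yields $N_m(\gamma) \ge 0$ uniformly for all $\gamma \ge -1/4$, as claimed.

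The main obstacle I anticipate is spotting the right parametrization: the substitution $t = \sqrt{1+4\gamma}$ converts the recursively defined $s_m$ into a finite binomial sum with transparent signs, after which the proof collapses to a one-line termwise comparison. A direct induction on $m$ from the recurrence \eq{rec:s} alone appears significantly more awkward, because the sign of the correction term $\gamma s_{m-1}$ depends on $\gamma$ and an inductive hypothesis would need to split $\gamma \in [-1/4, 0]$ from $\gamma \ge 0$, tracking different inequalities in each regime.
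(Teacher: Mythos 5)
Your proof is correct, but it takes a genuinely different route from the paper's. The paper establishes the equivalent inequality $(m+2)2^{m} s_{m+1} \le m\, 2^{m+1} s_{m+2} + 2$ by induction on $m$: it expands $s_{m+2}$ via the recurrence \eq{rec:s} and reduces the inductive step to the estimate $2s_{m+1} \le [1+m(1+4\gamma)]s_m$, which is inequality \eq{TP1} of Lemma~\ref{doubling}; that auxiliary inequality in turn needs its own induction with a case split between $\gamma \ge 0$ and $\gamma \in [-1/4,0)$ --- precisely the awkwardness you anticipated in your closing remark. Your substitution $t=\sqrt{1+4\gamma}$ sidesteps all of this: writing $2^m s_{m+1} = \tfrac12\bigl[(1+t)^{m+1}+(1-t)^{m+1}\bigr]$ turns the claim into termwise nonnegativity of the coefficients $m\binom{m+1}{k-1}-2\binom{m+1}{k}$ of the even powers $t^k$, $k\ge 2$, which your ratio computation verifies, with equality at $k=2$ matching the paper's base-case identity $6s_2=4s_3+2$. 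The one point to patch is the top term $k=m+2$ (present when $m$ is even), where $\binom{m+1}{k}=0$ and the ratio $mk/(m+2-k)$ is undefined; there the coefficient equals $m\ge 0$ directly, so nothing is lost. On balance, your argument is shorter, uniform in $\gamma$, and independent of Lemma~\ref{doubling}, while the paper's version has the virtue of staying entirely within the recursive framework it uses for the other Appendix lemmas.
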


\begin{proof}
We use induction to show that for $m \ge 1$
\begin{equation*}
\frac{2^m s_{m+1} - 1}{m(m+1)} \le
\frac{2^{m+1} s_{m+2} - 1}{(m+1)(m+2)},
\label{mono20}
\end{equation*}
or equivalently, for $m\ge 1$,
\begin{equation}
(m+2)2^{m} s_{m+1} \le m 2^{m+1} s_{m+2} + 2\, .
\label{indas0}
\end{equation}

The inequality \eq{indas0} holds for $m= 1$
 as $6s_2 = 6(1+2\gamma) = 4 (1+3\gamma) + 2 = 4s_3+2$.
Using \eq{rec:s} to expand $s_{m+2}$ in \eq{indas0} we obtain
$$
(m+2)2^{m} s_{m+1} \le  m 2^{m+1} (s_{m+1} + \gamma s_{m}) + 2,
$$
and \eq{indas0} is equivalent to
\begin{equation*}
2^m s_{m+1} - \gamma m 2^{m+1} s_m \le (m-1) 2^m s_{m+1} + 2\, .
\label{eqform0}
\end{equation*}
It suffices to prove that
\begin{equation}
2^m s_{m+1} - \gamma m 2^{m+1} s_m \le (m+1)2^{m-1} s_m \, ,
\label{aux55}
\end{equation}
since the induction assumption \eq{indas0}  for $m \rightarrow m-1$ implies
$(m+1)2^{m-1} s_m \le (m-1) 2^m s_{m+1} + 2$.
But \eq{aux55}  follows directly from \eq{TP1} of Lemma~\ref{doubling} as it  is equivalent to
$2s_{m+1} \le \left[1 + m(1+4\gamma)\right] s_m$.
\end{proof}

Finally, we prove two lemmas that provide bounds for growth of the sequence $\left\{s_{m}(\gamma)-1\right\}$.

\begin{lemma}
The sequence
$$
(s_{m} (\gamma) - 1)/m, \qquad \qquad m \ge 3,
$$
is non-decreasing in $m$ for $\gamma \in [-1/4,0)$.
\label{decrease2}
\end{lemma}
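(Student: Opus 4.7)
The plan is to reduce the monotonicity statement to an explicit induction on a single nonnegative quantity built from the recurrence \eqref{rec:s}. Clearing positive denominators, the inequality $(s_m-1)/m\le (s_{m+1}-1)/(m+1)$ (for $m\ge 3$) is equivalent to
$$
g(m) \;:=\; m\, s_{m+1}(\gamma) - (m+1)\, s_m(\gamma) + 1 \;\ge\; 0.
$$
Using \eqref{rec:s} in the form $s_{m+1}=s_m+\gamma s_{m-1}$ immediately simplifies this to
$$
g(m) \;=\; 1 - s_m(\gamma) + m\,\gamma\, s_{m-1}(\gamma),
$$
so the goal becomes $g(m)\ge 0$ for all $m\ge 3$ and all $\gamma\in[-1/4,0)$.

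First I would verify the base case by direct computation using the explicit values $s_2(\gamma)=1+2\gamma$ and $s_3(\gamma)=1+3\gamma$:
$$
g(3) \;=\; 1 - (1+3\gamma) + 3\gamma(1+2\gamma) \;=\; 6\gamma^2 \;\ge\; 0.
$$
Next, I would carry out the inductive step by computing $g(m+1)-g(m)$ and collapsing it using the recurrence twice. Since $s_m-s_{m+1}=-\gamma s_{m-1}$ and $s_m-s_{m-1}=\gamma s_{m-2}$, one gets
\begin{align*}
g(m+1)-g(m) &= (s_m-s_{m+1}) + (m+1)\gamma s_m - m\gamma s_{m-1}\\
&= -\gamma s_{m-1} + (m+1)\gamma(s_m-s_{m-1})\\
&= (m+1)\gamma^2 s_{m-2}(\gamma).
\end{align*}
By Lemma~\ref{slemma}, estimate \eqref{sest1}, $s_{m-2}(\gamma)\ge 2^{3-m}>0$ for every $\gamma\ge -1/4$, so $g(m+1)-g(m)\ge 0$, and combining with $g(3)\ge 0$ closes the induction for all $m\ge 3$.

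The only mild pitfall is keeping the inequality directions straight: in the regime $\gamma\in[-1/4,0)$ one has $s_m-1<0$ for $m\ge 2$ by \eqref{sest2a}, so the fraction $(s_m-1)/m$ is negative, and the monotonicity $\le$ corresponds to the sequence becoming \emph{less} negative. The algebraic clearing of denominators must therefore be done on the signed quantities as above, after which the remainder of the argument is a clean two-line telescoping supplied by the recurrence. No auxiliary machinery beyond \eqref{rec:s} and the positivity bound \eqref{sest1} is needed.
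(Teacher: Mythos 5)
Your proof is correct, and it takes a genuinely cleaner route than the paper's. Both arguments clear the (positive) denominators to arrive at the same inequality $g(m):=m\,s_{m+1}-(m+1)\,s_m+1\ge 0$ and both proceed by induction, but the inductive mechanisms differ. The paper assumes $g(m-1)\ge 0$, rewrites it via the recurrence as $s_m\le m\gamma s_{m-2}+1$, and then needs the auxiliary monotonicity fact $0<s_{m-1}<s_{m-2}$ on $[-1/4,0)$ before applying the recurrence once more. You instead compute the increment exactly, $g(m+1)-g(m)=(m+1)\gamma\left(s_m-s_{m-1}\right)=(m+1)\gamma^2 s_{m-2}$, which is manifestly nonnegative given only $s_{m-2}\ge 0$ (estimate \eqref{sest1}); combined with $g(3)=6\gamma^2\ge 0$ this telescopes to the claim. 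That closed-form identity for the increment is not in the paper, and it buys you something extra: your argument never uses the sign of $\gamma$, so it actually proves the monotonicity for all $\gamma\ge -1/4$, whereas the paper's step genuinely requires $\gamma<0$. One transcription slip to fix: the middle line of your displayed computation, $-\gamma s_{m-1}+(m+1)\gamma(s_m-s_{m-1})$, equals neither the line above nor the line below it; the term $-\gamma s_{m-1}$ should have been absorbed, since $-\gamma s_{m-1}+(m+1)\gamma s_m-m\gamma s_{m-1}=(m+1)\gamma(s_m-s_{m-1})$. The first and last lines of the chain are correct, so the argument stands as is once that line is repaired.
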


\begin{proof}
The statement is equivalent to $(m+1) s_{m} \le m s_{m+1} +1$, which we prove by induction.
First, for $m = 3$ we have
$4s_3 < 3s_4 + 1$, i.e.,
$4(1+3\gamma) < 3 (1+4\gamma + 2 \gamma^2)+1$ which holds for $\gamma \neq 0$.

Assume that the statement holds for  $m \rightarrow m-1$, i.e., $ms_{m-1} \le (m-1) s_{m} +1$, which is equivalent to $s_m \le m(s_m - s_{m-1})+1$. Thus $s_m \le m\gamma s_{m-2} + 1$. However, for $\gamma \in [-1/4,0)$ and $m \ge 2$ one has  $0 < s_{m-1} < s_{m-2}$ and thus $s_m \le m \gamma s_{m-1} + 1$. The claim follows by an application of \eq{rec:s} to $s_{m-1}$.
\end{proof}

\begin{lemma}
The sequence
$$
(s_{m+1} (\gamma) - 1)/(2^{-m}-1), \qquad \qquad m \ge 1,
$$
is (i) non-decreasing in $m$,  for $\gamma \in [-1/4,0)$; (ii)
non-increasing in $m$, for $\gamma > 0$.
\label{increase2}
\end{lemma}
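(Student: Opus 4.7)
My plan is to reduce the monotonicity of the sequence to a single pointwise inequality in $m$ that is within reach of Lemmas~\ref{doubling} and~\ref{gamlemma}. Write $r_m(\gamma) := (s_{m+1}(\gamma) - 1)/(2^{-m} - 1)$. The first step is the algebraic reduction: since both denominators appearing in $r_m$ and $r_{m+1}$ are negative, their product is positive, and multiplying $r_{m+1} - r_m$ through by this product and using the recurrence $s_{m+2} = s_{m+1} + \gamma s_m$ from \eq{rec:s}, the condition $r_{m+1} \ge r_m$ simplifies, after a short calculation, to
$$
s_{m+1}(\gamma) - 2\gamma(2^m - 1) s_m(\gamma) \ge 1, \qquad m \ge 1.
$$
The analogous reduction for $r_{m+1} \le r_m$ (case (ii)) yields the reverse inequality. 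Thus both parts of the lemma are encoded in a single expression whose direction is dictated by the sign of $\gamma$.

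The coefficient $2\gamma(2^m-1)$ in the reduced inequality differs from the coefficient $\gamma(2^m-1)$ appearing in Lemma~\ref{gamlemma} exactly by a factor of $2$, and this factor is absorbed by the doubling estimate $2s_m(\gamma) \ge s_{m-1}(\gamma)$, which is \eq{db1} of Lemma~\ref{doubling} after a shift $m \to m-1$. For $\gamma \in [-1/4, 0)$ the quantity $-\gamma(2^m-1)$ is positive, so $2s_m \ge s_{m-1}$ gives $-2\gamma(2^m-1) s_m \ge -\gamma(2^m-1) s_{m-1}$, whence
$$
s_{m+1} - 2\gamma(2^m-1) s_m \;\ge\; s_{m+1} - \gamma(2^m-1) s_{m-1} \;\ge\; 1
$$
by \eq{gamest}. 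For $\gamma > 0$ the signs flip and the same chain, using \eq{gamest2} instead, produces the upper bound $\le 1$. This covers all $m \ge 2$; the remaining edge case $m = 1$ is handled by a direct substitution: $s_2(\gamma) - 2\gamma \cdot (2-1) \cdot s_1(\gamma) = (1+2\gamma) - 2\gamma = 1$, which actually means $r_1(\gamma) = r_2(\gamma)$ identically, consistent with either monotonicity direction.

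I expect the only nontrivial part to be the algebraic reduction in the first paragraph: the two denominators, the shifted numerators, and the sign ambiguity of $s_{m+1}-1$ across the two regimes of $\gamma$ must all be tracked together, and it is not obvious a priori that the clean form $s_{m+1} - 2\gamma(2^m-1)s_m$ should emerge with the exact factor of $2$ needed to mesh with the doubling inequality. Once this reduction is in hand, however, the argument is essentially a single application of the previously established Lemmas~\ref{doubling} and~\ref{gamlemma}, with no further induction required.
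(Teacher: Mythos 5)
Your proof is correct and follows essentially the same route as the paper: the same algebraic reduction to $s_{m+1} - 2\gamma(2^m-1)s_m \gtrless 1$, closed by combining \eq{db1} of Lemma~\ref{doubling} with Lemma~\ref{gamlemma}. Your explicit check of the $m=1$ case (where $r_1 = r_2$ identically) is a welcome detail, since Lemma~\ref{gamlemma} is only stated for $m \ge 2$.
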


\begin{proof}
First, we prove (i), which is equivalent to $(2^{m+1}-2)s_{m+2} +1 \le (2^{m+1} - 1)s_{m+1}$.
Using \eq{rec:s} in the form $s_{m+2} = s_{m+1} + \gamma s_m$, this reduces to
$s_{m+1} - 2\gamma (2^m - 1)s_m \ge 1$. This follows directly from a combination of
$-\gamma (2^m -1) s_{m-1} + s_{m+1} \ge 1$, which holds for all $m \ge 2$, and $\gamma \in [-1/4,0)$ by Lemma~\ref{gamlemma} and
$s_{m-1} \le 2 s_m$ (see \eq{db1}).

Next we prove (ii) by an analogous argument. We have to show that
$(2^{m+1}-2)s_{m+2} +1 \ge (2^{m+1} - 1)s_{m+1}$,
which reduces (by \eq{rec:s} in the form $s_{m+2} = s_{m+1} + \gamma s_m$) to
$s_{m+1} - 2\gamma (2^m - 1)s_m \le 1$. This follows from
$-\gamma (2^m -1) s_{m-1} + s_{m+1} \le 1$ (by Lemma~\ref{gamlemma}) and $s_{m-1} \le 2 s_m$ (by \eq{db1}) for all $m \ge 2$.
\end{proof}

\section*{Acknowledgment}
This work was supported by the Slovak Research and Development Agency under the contract No.~APVV-14-0378, by the Scientific Grant Agency of the Slovak Republic under the grant 1/0755/19 (RK) and by the National Science Foundation under grant number NSF-DMS-1522677 (BD). Any opinions, findings, and conclusions or recommendations expressed in this material are those of the authors and do not necessarily reflect the views of the funding sources.
The authors wish to thank Casa Mathem{\'a}tica Oaxaca and Erwin Schr{\H o}dinger Institute for their hospitality during the development of the ideas for this work. To appear in SIAM Journal on Mathematical Analysis. Published on arXiv with permission of The Society for Industrial and Apllied Mathematics (SIAM).

\end{document}